\definecolor{refkey}{rgb}{0,1,1}
\definecolor{labelkey}{rgb}{1,0,0}
\journal{arXiv}
\newtheorem{thm}{Theorem}
\newtheorem{lem}{Lemma}
\newtheorem{cor}{Corollary}
\newtheorem{prop}[thm]{Proposition}
\numberwithin{equation}{section}
\newcommand{\eq} [1] {\begin{equation}\label{#1}\quad}
\newcommand{\en} {\end{equation}}
\newcommand{\scal}[1]{\langle#1\rangle}
\newcommand{\norm}[1]{\left\Vert#1\right\Vert}
\newcommand{\abs}[1]{\left\vert#1\right\vert}
\newcommand{\C}{\mathbb C}
\newcommand{\R}{\mathbb R}
\newcommand{\diag}{\operatorname{diag}}
\newcommand{\im}{\operatorname{Im}}
\newcommand{\conv}{\operatorname{conv}}
\newcommand{\re}{\operatorname{Re}}
\newcommand{\tr}{\operatorname{Tr}}
\newcommand{\Span}{\operatorname{Span}}
\newcommand{\Rk}{\operatorname{Rank}}
\begin{document}

\begin{frontmatter}

\title{On some 4-by-4 matrices \\ with bi-elliptical numerical ranges.\tnoteref{support}}


\author[leiden]{Titas Geryba} 
\ead{t.geryba@umail.leidenuniv.nl, tg1404@nyu.edu}
\address[leiden]{Mathematisch Instituut, Universiteit Leiden\\ Niels Bohrweg 1, 2333 CA Leiden,
Netherlands}
\author[nyuad]{Ilya M. Spitkovsky}
\address[nyuad]{Division of Science and Mathematics, New York  University Abu Dhabi (NYUAD), Saadiyat Island,
P.O. Box 129188 Abu Dhabi, United Arab Emirates}
\ead{imspitkovsky@gmail.com, ims2@nyu.edu, ilya@math.wm.edu}
\tnotetext[support]{The second author [IMS]  was supported in part by Faculty Research funding from the Division of Science and Mathematics, New York University Abu Dhabi.}

\begin{abstract}A complete description of 4-by-4 matrices $\begin{bmatrix}\alpha I & C \\D & \beta I\end{bmatrix}$, with scalar 2-by-2 diagonal blocks, for which the numerical range is the convex hull of two non-concentric ellipses is given. This result is obtained by reduction to the leading special case in which $C-D^*$ also is a scalar multiple of the identity. In particular cases when in addition $\alpha-\beta$ is real or pure imaginary, the results take an especially simple form. An application to reciprocal matrices is provided.
\end{abstract}

\begin{keyword} Numerical range, bi-elliptical shape, reciprocal matrix \end{keyword}

\end{frontmatter}

\section{Introduction}

The numerical range  of a matrix $A\in\C^{n\times n}$ is defined as 
\[ W(A)=\{\scal{Ax,x}\colon x\in\C^n, \ \norm{x}=1\}. \]

We are using the standard notation $\scal{.,.}$ for the inner product on the $n$-dimensional space $\C^n$ and $\norm{.}$ for the norm associated with it: $\norm{x}^2=\scal{x,x}$. 

It is well known that $W(A)$ is a convex compact subset of $\C$ containing the spectrum $\sigma(A)$ of $A$, and thus its convex hull $\conv\sigma(A)$. For normal matrices $A$, the equality $W(A)=\conv\sigma(A)$ holds. On the other hand, for non-normal $A\in\C^{2\times 2}$ the numerical range is an elliptical disk, with the foci at the eigenvalues. These and other by now classical properties of $W(A)$ can be found, e.g., in \cite[Chapter 1]{HJ2} or more recent \cite[Chapter~6]{DGSV}.

The shape of $W(A)$ for $A\in\C^{3\times 3}$ is also known, see \cite{Ki} (or its translation \cite{Ki08} into English) for the classification and \cite{KRS} for the pertinent tests. However, for $n\geq 4$ many questions remain open. In particular, it is of interest when the boundary $\partial W(A)$ of the numerical range contains an elliptical arc \cite{GauWu081}. 

A useful tool in studying properties of $W(A)$ is the so called {\em Numerical range (NR) generating curve} $C(A)$, also introduced in \cite{Ki} --- the {\em Kippenhahn curve} in the terminology of \cite[Chapter~13]{DGSV}, where a very lucid and detailed description of $C(A)$ is given. This curve is defined uniquely by having exactly $n$ tangent lines in each direction $e^{i\theta}$, intercepting the family of orthogonal lines at the eigenvalues $\lambda_j(\theta)$ of $\im(e^{-i\theta}A)$. As it happens, $W(A)=\conv C(A)$, and so $\partial W(A)$ consists of some arcs of $C(A)$ possibly connected by line segments. 

We will denote the characteristic polynomial of $\im(e^{-i\theta}A)$ by $P_A(\lambda,\theta)$ and call it the {\em NR generating polynomial} of $A$. From the description of $C(A)$ it follows in particular that it contains an ellipse if and only if $P_A(\lambda,\theta)$ is divisible by a polynomial quadratic in $\lambda$. More specifically (as can be established by direct computations similar to those carried out say in \cite{ChienHung}), 
an ellipse centered at $p+iq$ ($p,q\in\R$) corresponds to a factor of the form 
\eq{qua} (\lambda+p\sin\theta-q\cos\theta)^2+x\cos 2\theta+y\sin 2\theta-z\en 
with some $x,y,z\in\R$ satisfying $z\geq\sqrt{x^2+y^2}$. Note that if $z=\sqrt{x^2+y^2}$, then the quadratic \eqref{qua} factors further into two linear functions in $\lambda$, and the ellipse in question degenerates into the doubleton of its foci. 

If $n=4$ and $P_A(\lambda,\theta)$ is divisible by \eqref{qua}, then the quotient is of the same type. Consequently, for $A\in\C^{4\times 4}$ the boundary of $W(A)$ contains an elliptical arc if and only if 
$C(A)$ consists of two ellipses, one of which is possibly degenerate. So, $\partial W(A)$ contains an elliptical arc if and only if $W(A)$ is an elliptical disk, the convex hull of two ellipses, or the convex hull of an ellipse and one or two points (the latter being an option only if $A$ is unitarily reducible). The respective criteria were established in \cite{Gau06}. However, these criteria are not stated in terms of $A$ directly and therefore not easy to verify. More can be done if $A$ enjoys some additional structure. 

This paper is devoted to 4-by-4 matrices of the form 
\eq{A} A=\begin{bmatrix} \alpha I & C \\ D & \beta I \end{bmatrix}.\en 
In \cite[Section~4]{GeS}, we have provided necessary and sufficient conditions for such matrices to have $W(A)$ in the shape of an elliptical disk or the convex hull of two concentric ellipses. Here we treat the remaining case, when $W(A)$ is the convex hull of two ellipses with distinct centers. For convenience of reference, this shape is called {\em bi-elliptical} in what follows.  

The special case of matrices \eqref{A} with $C-D^*$ being a scalar multiple of the identity is considered in Section~\ref{s:lsc}, after some preliminary technicalities disposed of in Section~\ref{s:pre}. Under additional restrictions on $\alpha-\beta$, this result is further simplified in Section~\ref{s:ex} which also contains several numerical examples. As it happens, the general case can be reduced to the one tackled in Section~\ref{s:lsc}; this reduction is carried over in Section~\ref{s:gen}. The final Section~\ref{s:rec} contains an application to so called reciprocal matrices.

\section{Preliminary results} \label{s:pre}

Passing from $A$ to $A-\frac{\alpha+\beta}{2}I$, we may (and will, in what follows) without loss of generality suppose that in \eqref{A} $\beta=-\alpha$.
As in \cite{GeS}, we will also use the notation \eq{HZ} H=C^*C+DD^*,\quad Z=DC. \en 

\begin{lem}\label{th:leig}Let $A$ be given by \eqref{A}, with $\beta=-\alpha$. Then:

{\em (i)}  The  eigenvalues of $A$ 
are $\pm\sigma_1, \ \pm\sigma_2$, where $\sigma_j=\sqrt{z_j+\alpha^2}$ and $z_1,z_2$ are the eigenvalues of $Z$; 

{\em (ii)} 	 The  eigenvalues of $\im(e^{-i\theta}A)$ 
are $\pm\lambda_{j}(\theta)$, where \[ \lambda_j(\theta)=\sqrt{(\im(e^{-i\theta}\alpha))^2+\mu_j(\theta)/4} \quad  (j=1,2),\] and $\mu_j(\theta)$ are the eigenvalues of 
\eq{uni} \left(e^{i\theta}C^*-e^{-i\theta}D\right)\left(e^{-i\theta}C-e^{i\theta}D^*\right)=H-2\re(e^{-2i\theta}Z);\en 
	
{\em (iii)} The NR generating polynomial $P_A$ of $A$ is $\lambda^4-\Xi_1(\theta)\lambda^2+\Xi_2(\theta)$, where 
\begin{multline} \label{xi1} 
\Xi_1(\theta)
= 
\frac{1}{4}\tr H + |\alpha|^2 - \left (\frac{1}{2} \re \tr Z + \re \alpha^2  \right) \cos 2 \theta \\ - \left( \frac{1}{2} \im \tr Z  + \im \alpha^2 \right)  \sin 2\theta, 
\end{multline}  
\begin{multline}\label{eta1} 
16\Xi_2(\theta)
	= 
	6|\alpha|^4 
	+ 
	2|\alpha|^2 \tr H + 2\re(\overline{\alpha^2} \tr Z)
	+ 
	\frac{1}{2}\tr(H)^2 - \frac{1}{2} \tr(H^2) + |\tr(Z)|^2 \\ - \tr(Z Z^*)-\re\zeta_1\cos 2\theta-\im\zeta_1\sin 2\theta+\re\zeta_2\cos 4\theta+\im\zeta_2\sin 4\theta,   
\end{multline}  with $\zeta_1, \zeta_2$ given by 
\begin{align*} \zeta_1 & = 	8|\alpha|^2\alpha^2 + 2\alpha^2\tr H 
+ 4|\alpha|^2  \tr Z  + 2 \tr Z \tr H - 2 \tr ZH, \\
\zeta_2  & = 	2\det Z + 2\alpha^2 \tr Z + 2\alpha^4. \end{align*} 
\end{lem}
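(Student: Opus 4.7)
To get (i), observe that the diagonal blocks $(\alpha-\lambda)I$ and $-(\alpha+\lambda)I$ of $A-\lambda I$ are scalar and commute with $C$ and $D$; the standard block-determinant formula then gives $\det(A-\lambda I) = \det\bigl((\lambda^2-\alpha^2)I - CD\bigr)$. Because $C$ and $D$ are $2\times 2$, $CD$ and $DC=Z$ share a characteristic polynomial, so the four roots are $\lambda = \pm\sqrt{\alpha^2+z_j}$.

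The plan for (ii) is to bootstrap from (i) by applying it to $\im(e^{-i\theta}A)$. I would write $2i\,\im(e^{-i\theta}A) = e^{-i\theta}A - e^{i\theta}A^*$ and observe that the right-hand side is again of the shape \eqref{A}, with diagonal scalar $2i\,\im(e^{-i\theta}\alpha)$ and off-diagonal blocks $e^{-i\theta}C-e^{i\theta}D^*$ (upper right) and $e^{-i\theta}D-e^{i\theta}C^*$ (lower left). Dividing by $2i$ preserves this block structure, so (i) yields eigenvalues $\pm\sqrt{(\im(e^{-i\theta}\alpha))^2+\tilde z_j}$, with $\tilde z_j$ the eigenvalues of $(2i)^{-2}(e^{-i\theta}D-e^{i\theta}C^*)(e^{-i\theta}C-e^{i\theta}D^*) = \tfrac14(e^{i\theta}C^*-e^{-i\theta}D)(e^{-i\theta}C-e^{i\theta}D^*)$. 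Expanding this product and using $H=C^*C+DD^*$ together with $Z=DC$ verifies the identity in \eqref{uni} and shows $\tilde z_j = \mu_j/4$.

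For (iii), the $\pm$ pairing from (ii) forces $P_A(\lambda,\theta) = (\lambda^2-\lambda_1^2)(\lambda^2-\lambda_2^2)$, so $\Xi_1 = \lambda_1^2+\lambda_2^2$ and $\Xi_2 = \lambda_1^2\lambda_2^2$. Writing $M(\theta) = H-2\re(e^{-2i\theta}Z)$, one has $\mu_1+\mu_2 = \tr M$ and, by the $2\times 2$ Newton identity, $\mu_1\mu_2 = \det M = \tfrac12\bigl((\tr M)^2 - \tr(M^2)\bigr)$. Combined with the elementary expansions $2(\im(e^{-i\theta}\alpha))^2 = |\alpha|^2 - \re(e^{-2i\theta}\alpha^2)$, $8(\im(e^{-i\theta}\alpha))^4 = 3|\alpha|^4 - 4|\alpha|^2\re(e^{-2i\theta}\alpha^2) + \re(e^{-4i\theta}\alpha^4)$, and $\tr(Z^2) = (\tr Z)^2 - 2\det Z$, substituting and rewriting each $\re(e^{-2ki\theta}w)$ as $\re(w)\cos 2k\theta + \im(w)\sin 2k\theta$ produces \eqref{xi1} and \eqref{eta1}.

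The $\Xi_1$ identity falls out immediately. The main obstacle will be the $\Xi_2$ bookkeeping: three sources --- $(\im(e^{-i\theta}\alpha))^4$, the cross term $(\im(e^{-i\theta}\alpha))^2\tr M/4$, and $\det M/16$ --- all contribute to every trigonometric frequency, and one must check that the $\cos 2\theta, \sin 2\theta$ contributions collapse into $-\re\zeta_1,-\im\zeta_1$ while the $\cos 4\theta, \sin 4\theta$ contributions collapse into $\re\zeta_2,\im\zeta_2$. The $4\theta$ collection is cleaner (each source donates just one summand), so I would verify it first as a sanity check before tackling the longer $2\theta$ accounting.
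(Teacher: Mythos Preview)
Your proposal is correct and follows essentially the same strategy as the paper: Schur complement / block-determinant for (i) and (ii), then expand $(\lambda^2-\lambda_1^2)(\lambda^2-\lambda_2^2)$ for (iii). Your explicit bootstrap---observing that $\im(e^{-i\theta}A)$ again has the form \eqref{A} and reapplying (i)---is a clean way to package what the paper defers to \cite[Lemma~2.1]{GeS}, and the trigonometric identities you record for $(\im(e^{-i\theta}\alpha))^2$, $(\im(e^{-i\theta}\alpha))^4$, and $\tr(Z^2)$ are exactly the ingredients needed for the $\Xi_2$ bookkeeping the paper leaves implicit.
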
 
\begin{proof}For (i) and (ii), the result follows by using Schur complement formula when computing the respective characteristic polynomials; the pertinent computation for (ii) is actually contained in the proof of \cite[Lemma~2.1]{GeS}.  
Expanding $P_A(\lambda,\theta)= (\lambda^2-\lambda_1^2(\theta))(\lambda^2-\lambda_2^2(\theta))$, we derive (iii) from (ii). \end{proof} 
Observe that $P_A$ is an even function of $\lambda$. This agrees with the result of \cite{GeS} for matrices \eqref{A} with arbitrary block sizes, implying that $C(A)$ is symmetric with respect to the origin. From here we immediately obtain 

\begin{prop} \label{th:nec}Suppose $A\in\C^{4\times 4}$ of the form \eqref{A} is such that $C(A)$ consists of two non-concentric ellipses. Then, for an appropriate choice of signs of $\sigma_j$,  $\partial W(A)$ contains a pair of parallel line segments coinciding in length and direction with $\sigma_1+\sigma_2$. \end{prop}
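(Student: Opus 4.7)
My plan is to exploit the central symmetry of $C(A)$ about the origin (a consequence of $P_A$ being even in $\lambda$) to identify the two ellipses as mirror images of one another, pin down their common shift in terms of $\sigma_1,\sigma_2$, and then invoke a convex-geometric observation about the convex hull of two translated ellipses.

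By Lemma~\ref{th:leig}(iii), $P_A(\lambda,\theta)$ is an even function of $\lambda$, so $C(A)$ is invariant under $w\mapsto -w$. Writing $C(A)=E_1\cup E_2$, note that an ellipse is symmetric about its own centre and no other point, so the involution $w\mapsto -w$ either fixes each $E_j$ individually---forcing both to be centred at $0$ and hence concentric, contrary to hypothesis---or interchanges them. Consequently $E_2=-E_1$, and the two ellipses have centres $\pm c$ for some $c\neq 0$.

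Next I use that each ellipse in $C(A)$ arises from a quadratic factor of $P_A$ of the shape \eqref{qua}, and that its foci are two of the eigenvalues of $A$. By Lemma~\ref{th:leig}(i) the eigenvalues of $A$ are $\pm\sigma_1,\pm\sigma_2$, and since $E_2=-E_1$ negates foci, the foci of $E_1$ form a pair $\{\varepsilon_1\sigma_1,\varepsilon_2\sigma_2\}$ with $\varepsilon_j\in\{\pm1\}$. Replacing each $\sigma_j$ by $\varepsilon_j\sigma_j$---the ``appropriate choice of signs'' the statement refers to---makes the foci of $E_1$ equal to $\sigma_1,\sigma_2$, placing the centre of $E_1$ at $c=(\sigma_1+\sigma_2)/2$. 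Thus $E_2=E_1-(\sigma_1+\sigma_2)$ is a pure, non-trivial translate of $E_1$.

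To finish, let $L$ be either of the two tangent lines to $E_1$ parallel to $\sigma_1+\sigma_2$, touching $E_1$ at a point $p$. Because $L$ is parallel to the translation vector, $L$ is fixed by that translation and therefore is also tangent to $E_2$ at $p-(\sigma_1+\sigma_2)$. Hence $L$ is a common supporting line of $E_1\cup E_2$, and the segment from $p-(\sigma_1+\sigma_2)$ to $p$ lies on $\partial\conv(E_1\cup E_2)=\partial W(A)$, with direction $\sigma_1+\sigma_2$ and length $|\sigma_1+\sigma_2|$. Taking both choices of $L$ yields the asserted pair of parallel segments. The main friction point is the claim used above that foci of elliptic components of $C(A)$ are eigenvalues of $A$: it is classical but not established in the excerpt, so one would need either a citation to \cite{Ki,DGSV} or a brief direct check comparing the roots of \eqref{qua} at $\theta=0$ and $\theta=\pi/2$ with the eigenvalue data from Lemma~\ref{th:leig}(i)--(ii).
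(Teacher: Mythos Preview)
Your proof is correct and follows essentially the same route as the paper's: both use the central symmetry of $C(A)$ to conclude $E_2=-E_1$, identify the translation vector as $\sigma_1+\sigma_2$ via the foci, and read off the flat portions as the common tangents parallel to that vector. The only cosmetic difference is that the paper phrases the translation computation via a composition of two point-reflections rather than your midpoint-of-foci formula; your flag about the foci-equals-eigenvalues fact is fair, though the paper simply asserts it as standard.
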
 
\begin{proof}Let $C(A)$ consist of the ellipses $E_1,E_2$. Due to the central symmetry of $C(A)$, either both $E_1,E_2$ also are symmetric with respect to the origin, or $E_1 (:= E)=-E_2\neq -E$.   The former case is excluded, because otherwise $E_1, E_2$ would be concentric. Furthermore, the foci of $\pm E$ are the eigenvalues of $A$, and so (relabeling $z_1, z_2$ if needed, and choosing the square roots signs appropriately) we may suppose that $\sigma_1,\sigma_2$ are the foci of $E$. 
	
Consider now the composition $\mathcal S$ of two symmetries, one with respect to the origin and the other with respect to the center of $E$.  By its construction, $\mathcal S$ is a shift and, since  $\mathcal S(-(\sigma_1+\sigma_2)/2)=(\sigma_1+\sigma_2)/2$, it is the shift by $\sigma_1+\sigma_2$. So, 
\[ E={\mathcal S}(-E)=-E+(\sigma_1+\sigma_2). \] The flat portions on the boundary of $W(A)$ are therefore the common tangents of $E$ and $-E$, and the endpoints of each differ by $\sigma_1+\sigma_2$.  \end{proof} 

\section{Leading special case} \label{s:lsc} 

Let \eq{AB} A= \begin{bmatrix} \alpha I & B^*+I \\ B-I & -\alpha  I\end{bmatrix}. \en
This is a particular case of \eqref{A} in which $C=B^*+I$ and $D=B-I$.
Respectively, \eqref{HZ} takes the form 
\eq{HZB} H= 2(BB^*+I), \quad Z= BB^*-I+2i \im B. \en 
Let us denote the eigenvalues of $\im B$ by $\beta_1,\beta_2$ while keeping the notation $\pm\sigma_j$ for the eigenvalues of $A$. Let us also write $\alpha$ as $u+iv$ ($u,v\in\R$). 
\begin{thm}\label{th:spec}For $A$ as in \eqref{AB}, $W(A)$ is bi-elliptical if and only if $B$ is not normal and 
\eq{spco} (1+v^2) (\sigma_1+\sigma_2)^2=(\beta_1-\beta_2)^2. \en \end{thm}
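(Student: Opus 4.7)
The plan is to translate the bi-elliptical condition into a factorization requirement on the NR generating polynomial and then extract the algebraic condition on $B$ and $\alpha$. By the discussion preceding \eqref{qua} together with the central symmetry of $C(A)$ (inherited from $\beta=-\alpha$), $W(A)$ is bi-elliptical if and only if $P_A(\lambda,\theta)$ factors as
\[ \bigl[(\lambda-\rho(\theta))^2+\Phi(\theta)\bigr]\bigl[(\lambda+\rho(\theta))^2+\Phi(\theta)\bigr], \]
with $\rho(\theta)=q\cos\theta-p\sin\theta\not\equiv 0$ (so the two ellipses, centered at $\pm(p+iq)$, are genuinely distinct) and $\Phi(\theta)=x\cos 2\theta+y\sin 2\theta-z$ satisfying $z\ge\sqrt{x^2+y^2}$. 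Matching coefficients against $P_A=\lambda^4-\Xi_1\lambda^2+\Xi_2$ yields $\Xi_1=2(\rho^2-\Phi)$ and $\Xi_2=(\rho^2+\Phi)^2$, so bi-ellipticity is equivalent to the conditions that (a) $\sqrt{\Xi_2}$ is a trigonometric polynomial of degree at most $1$ in $2\theta$, and (b) $\tfrac{1}{4}(\Xi_1+2\sqrt{\Xi_2})$ equals $(q\cos\theta-p\sin\theta)^2$ for some $(p,q)\ne(0,0)$.

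For $A$ as in \eqref{AB}, a Schur-complement computation on
\[ \im(e^{-i\theta}A)=\begin{bmatrix}\mu I & -M^* \\ -M & -\mu I\end{bmatrix}, \]
with $\mu=v\cos\theta-u\sin\theta$ and $M=\sin\theta\,B-i\cos\theta\,I$, gives $P_A=\det\bigl((\lambda^2-\mu^2)I-M^*M\bigr)$, whence
\[ \Xi_1=2\mu^2+\tr(M^*M),\qquad \Xi_2=\mu^4+\mu^2\tr(M^*M)+|\det M|^2. \]
Using $\tr(M^*M)=\sin^2\theta\,\tr(B^*B)+2\cos^2\theta-\sin 2\theta\,(\beta_1+\beta_2)$ and $\det M=(\det B)\sin^2\theta-i(\tr B)\sin\theta\cos\theta-\cos^2\theta$, I expand $\Xi_2$ as an explicit trigonometric polynomial of degree at most $2$ in $2\theta$.

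Before applying condition (a), I dispose of the normal case. If $B$ is normal, diagonalize $B$ by a unitary $U$; then the unitary $\diag(U,U)$ followed by a permutation of rows and columns brings $A$ into a direct sum of the $2\times 2$ blocks $\left[\begin{smallmatrix}\alpha & \bar b_j+1\\ b_j-1 & -\alpha\end{smallmatrix}\right]$ ($j=1,2$), each of trace zero; consequently each block has an elliptical numerical range centered at the origin, so $W(A)$ is the convex hull of two concentric ellipses and is not bi-elliptical. This proves $B$ non-normal is necessary.

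Assuming $B$ non-normal, imposing (a) at the level of the $\cos 4\theta,\sin 4\theta,\cos 2\theta,\sin 2\theta$ coefficients of $\Xi_2$ reduces to a single algebraic identity, while condition (b) combined with Proposition~\ref{th:nec} pins the center via $p+iq=(\sigma_1+\sigma_2)/2$. Substituting $(\sigma_1+\sigma_2)^2=\tr Z+2\alpha^2+2\sqrt{\det Z+\alpha^2\tr Z+\alpha^4}$ with $Z=BB^*-I+2i\,\im B$ (from \eqref{HZB} and Lemma~\ref{th:leig}(i)) and using $\im(\tr B)=\beta_1+\beta_2$ and $(\beta_1-\beta_2)^2=(\tr\im B)^2-4\det(\im B)$, the identity collapses exactly to \eqref{spco}. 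The converse is obtained by retracing the same steps, observing that non-normality of $B$ together with \eqref{spco} forces $\rho\not\equiv 0$. The principal technical obstacle is this final algebraic collapse, where both $\tr B$ and $\det B$ are complex; passing to $\zeta=e^{2i\theta}$ and reading the perfect-square requirement as the vanishing of the discriminant of a degree-$2$ Laurent polynomial in $\zeta$ should keep the bookkeeping of real and imaginary parts manageable.
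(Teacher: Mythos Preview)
Your overall strategy---reducing bi-ellipticity to the factorization
\[
P_A(\lambda,\theta)=\bigl[(\lambda-\rho(\theta))^2+\Phi(\theta)\bigr]\bigl[(\lambda+\rho(\theta))^2+\Phi(\theta)\bigr]
\]
and then matching coefficients---is sound and is essentially how the paper handles the \emph{sufficiency} direction. Your treatment of the normal case is also fine (and a bit cleaner than the paper's, which invokes an earlier result rather than writing out the direct sum explicitly).

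However, there is a genuine gap. The heart of the argument---the claim that ``imposing (a) \dots\ reduces to a single algebraic identity'' and that after substitution ``the identity collapses exactly to \eqref{spco}''---is asserted but never carried out. You yourself flag this as ``the principal technical obstacle'' and only say that the discriminant-in-$\zeta$ approach ``should'' keep the bookkeeping manageable. That is not a proof; it is a plan. In the paper this step is the bulk of the work: one writes $\rho(\theta)=p\sin\theta$ (so $q=0$), solves the linear equation $\Xi_1=2(\rho^2-\Phi)$ for $x,y,z$, and then shows by an explicit expansion that
\[
(\rho^2+\Phi)^2-\Xi_2=\tfrac{1}{32}\bigl[3\,\re T-4\,\re T\cos 2\theta-2\,\im T\sin 2\theta+\re T\cos 4\theta+\im T\sin 4\theta\bigr],
\]
where $T$ is the explicit quantity \eqref{t1} whose vanishing is equivalent to \eqref{spco}. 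Without this computation (or an equivalent one), you have not established the equivalence.

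There is also a structural difference in the \emph{necessity} direction that you should be aware of. You try to extract both the center $(p,q)$ and the algebraic condition from the perfect-square requirement on $\Xi_2$. The paper avoids this entirely: it first conjugates $A$ to a \emph{proper tridiagonal} form and invokes the Brown--Spitkovsky theorem on flat portions to deduce that the only flat portions of $\partial W(A)$ are horizontal, at height $\pm\sqrt{1+v^2}$. This immediately forces $q=0$ and reduces the problem to computing the length of those segments, which is the spread of a single explicit $2\times2$ compression; combined with Proposition~\ref{th:nec}, this gives \eqref{spco} directly. Your route does not establish $q=0$ (equivalently, that $(\sigma_1+\sigma_2)^2$ is real), and getting it purely from the algebra of $\Xi_2$ is possible but is again part of the computation you have not done.

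Finally, for sufficiency you also need to check non-degeneracy of the ellipses, i.e.\ the strict inequality $z>\sqrt{x^2+y^2}$; showing $\rho\not\equiv 0$ only rules out concentricity, not degeneration to a doubleton of foci. The paper verifies this explicitly, using the non-normality of $B$ (equivalently $b\neq 0$) in an essential way.
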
 
Both in the proof of Theorem~\ref{th:spec} and its further application, putting block $B$ in
an upper triangular form \eq{utb} B=\begin{bmatrix} b_1 & b \\ 0 & b_2 \end{bmatrix}\en  and rewording conditions in terms of its entries proves to be useful. This can be done by a block diagonal unitary similarity of $A$, preserving its structure \eqref{AB}. Moreover, it can be arranged that $b\geq 0$. So, without loss of generality 
\eq{AB1} A= \begin{bmatrix} \alpha & 0 & \overline{b_1}+1 & 0 \\ 0 & \alpha & b & \overline{b_2}+1 \\
b_1-1 & b & -\alpha & 0 \\ 0 & b_2-1 & 0 & -\alpha \end{bmatrix}. \en
Let us also denote $\re b_j=\xi_j,\ \im b_j=\eta_j, \ j=1,2$.

Plugging in the values of $\sigma_j$ from Lemma~\ref{th:leig}, condition \eqref{spco} can be rewritten as  
\eq{spco1} \tr Z +2\alpha^2+2\sqrt{\det Z+\alpha^2\tr Z+\alpha^4}=4p^2, \en
where  
\eq{spr} \sqrt{\frac{(\eta_1-\eta_2)^2+b^2}{1+v^2}}:= 2p.\en Rewriting \eqref{spco1} as 
\[ 2\sqrt{\det Z+\alpha^2\tr Z+\alpha^4}=4p^2 -\tr Z-2\alpha^2\]
and taking the square, we find that \eqref{spco} is equivalent to $T=0$, where 
\eq{t1} T:=16p^4 - (8\tr Z + 16\alpha^2) p^2 + (\tr Z)^2 - 4\det Z. \en
This condition is in its turn equivalent to both $\re T$ and $\im T$ being equal to zero. For future use observe therefore that 	
\begin{align}\label{ret}
\re T = & \left(4p^2-(b^2+\abs{b_1}^2+\abs{b_2}^2)\right)^2-16u^2p^2-4\abs{b_1}^2\abs{b_2}^2,\\
\im T = & 16v\left(v(\eta_1 + \eta_2) - 2u\right) p^2+ 4 (\xi_1^2-\xi_2^2)(\eta_1 - \eta_2).\label{imt}
\end{align}
\smallskip 
{\em Proof of Theorem~\ref{th:spec}.} {\sl Necessity.} If $B$ is normal then it follows from \eqref{HZB} that so is $Z$. Moreover, $Z$ and $H$ commute. This situation falls under the setting of \cite[Theorem 4.1]{GeS}, according to which $W(A)$ is the convex hull of two ellipses, but these ellipses are concentric. So, $B$ is not normal.  

After yet another unitary (this time, permutational) similarity, the matrix \eqref{AB1} becomes  	 
\[   A_0= \begin{bmatrix} \alpha & \overline{b_1}+1 & 0 & 0 \\ b_1-1  & -\alpha &  b & 0 \\
0  & b & \alpha & \overline{b_2}+1 \\ 0 & 0 & b_2-1 & -\alpha \end{bmatrix}. \]
This matrix is tridiagonal. Moreover, $b\neq 0$ since $B$ is not normal. In terminology of \cite{BS041} it implies that $A_0$ is {\em proper} (meaning that entries in the positions $(j-1,j)$ and $(j,j-1)$ cannot simultaneously equal zero). Invoking \cite[Theorem 10]{BS041}, we conclude that the only flat portions of $\partial W(A)$ are horizontal. Their $y$-coordinates are determined by the eigenvalues of $\im A_0$, which is the direct sum of two copies of $\begin{bmatrix} v & -i \\ i & -v\end{bmatrix}$, and therefore equal $\pm\sqrt{1+v^2}$. The lengths of these portions are equal to the spread of the compression $H_0$ of $\re A_0$ onto the (2-dimensional) subspace generated by the eigenvectors of $\im A_0$ corresponding to either of its eigenvalues. Denoting $f=\sqrt{1+v^2}-v$ and $k=1/\sqrt{1+f^2}$, an orthonormal basis in one of these subspaces can be chosen as \[ k[-i,f,0,0]^T \text{ and } k[0,0,-i,f]^T. \] The matrix of $H_0$ in this basis is 
\[ \frac{1}{2\sqrt{1+v^2}}\begin{bmatrix} u(f^{-1}-f)+2\eta_1 & -ib \\ ib &  u(f^{-1}-f)+2\eta_2\end{bmatrix}, \]
and the spread of this matrix equals $2p$ defined by \eqref{spr}.
It remains to observe that $(\beta_1-\beta_2)^2=(\eta_1-\eta_2)^2+b^2$ and to make use of Proposition~\ref{th:nec}. 	

{\sl Sufficiency.} Suppose that \eqref{spco} holds.
We will now show that under this condition the NR generating polynomial of $A$  factors as 
\eq{fact} P_A(\lambda,\theta)=\left((\lambda+p\sin\theta)^2+\Omega(\theta)\right)\left((\lambda-p\sin\theta)^2+\Omega(\theta)\right),\en 
where $\Omega(\theta)=x\cos 2\theta+y\sin 2\theta-z$ for some appropriate choice of real parameters $x,y,z$.

According to Lemma~\ref{th:leig}(iii), \eqref{fact} is equivalent to 
\eq{eq1} 2\left(p^2\sin^2\theta-\Omega(\theta)\right)=\Xi_1(\theta), \quad \left(p^2\sin^2\theta+\Omega(\theta)\right)^2=\Xi_2(\theta),\en 
where $\Xi_1$ and $\Xi_2$ are given by \eqref{xi1} and \eqref{eta1}, respectively. 
The first equality in \eqref{eq1} defines $x,y,z$ uniquely as 
\eq{xyz}
\begin{cases}
x = \frac{1}{4} \re \tr Z + \frac{1}{2} \re(\alpha^2) - \frac{p^2}{2},
\\
y = \frac{1}{4} \im \tr Z + \frac{1}{2} \im (\alpha^2),
\\ 
z = 
\frac{1}{8} \tr H +  \frac{|\alpha|^2}{2} - \frac{p^2}{2}, 
\end{cases}\en 
or, in terms of $B$ directly:
\eq{xyz1} \begin{cases}
x = 
\frac{1}{4} (b^2 + |b_1|^2 + |b_2|^2 - 2) + \frac{1}{2} \re (\alpha^2) - \frac{1}{8}\frac{b^2 + (\eta_1 - \eta_2)^2}{1+v^2}, \\
y = 
\frac{1}{2} (\eta_1 + \eta_2) + \frac{1}{2} \im (\alpha^2), \\ 
z = 
\frac{1}{4} (b^2 + |b_1|^2 + |b_2|^2 + 2 ) + \frac{1}{2} |\alpha|^2 - \frac{1}{8}\frac{b^2 + (\eta_1 - \eta_2)^2}{1+v^2}.
\end{cases}\en 
Incidentally, with this choice of $x,y,z$, 	
the second equality in \eqref{eq1} also holds. Here is the pertinent chain of computations. First, form \eqref{xyz}:
	\begin{align*}
	4\left[ p^2 \sin^2(\theta) + \Omega (\theta)\right] 
	& = 
	- \frac{1}{2} \tr H - 2 |\alpha|^2 + 4p^2
	\\ & +
	\left[ 
	\re \tr Z + 2 \re(\alpha^2)
	- 4p^2 
	\right] \cos 2 \theta 
	\\ & + 
	\left[ 
	\im \tr Z + 2 \im(\alpha^2)
	\right] \sin 2 \theta 
	\end{align*}
	Squaring we obtain:
	\begin{align*}
	& 16\left[ p^2 \sin^2(\theta) + \Omega(\theta) \right]^2
	\\ & =  
	24p^4 - 4p^2 \left( \re ( \tr Z + 2\alpha^2) + \tr H + 4|\alpha|^2 \right) 
	\\ & 
	+ \frac{1}{2}  | \tr Z + 2 \alpha^2|^2 + \left(\frac{1}{2} \tr H +  2|\alpha|^2 \right)^2 
	\\ & - 
	8\re \left[ \left(2p^2 - \frac{1}{2} \tr Z - \alpha^2 \right)\left(2p^2 - \frac{1}{4} \tr H -  |\alpha|^2\right) \right] \cos 2 \theta 
	\\ & - 
	8\im \left[ \left(2p^2 - \frac{1}{2} \tr Z - \alpha^2 \right)\left(2p^2 - \frac{1}{4} \tr H -  |\alpha|^2\right) \right] \sin 2 \theta 
	\\ & + 
	2\re \left[ (2p^2 - \frac{1}{2} \tr Z - \alpha^2)^2 \right] \cos 4 \theta 
	\\ & +
	2\im \left[ (2p^2 - \frac{1}{2} \tr Z - \alpha^2)^2 \right] \sin 4 \theta .
	\end{align*}
Plugging in $\Xi_2$ from \eqref{eta1}:
\begin{align} \label{pomegasq}
& 32\left[ \left(p^2\sin^2\theta+\Omega(\theta)\right)^2-\Xi_2 (\theta) \right]
\\ & = \nonumber
3\bigg[16p^4 - \frac{8}{3} \left( \re ( \tr Z + 2\alpha^2) + \tr H + 4|\alpha|^2 \right) p^2
\\ & \nonumber
+ \frac{2}{3}\tr(Z^*Z) - \frac{1}{3}|\tr Z|^2  + \frac{1}{3} \tr(H^2) - \frac{1}{6}\tr(H)^2 \bigg]
\\ & - \nonumber
4\re \left[16p^4 - 2(\tr H + 2\tr Z +  4\alpha^2 + 4|\alpha|^2)p^2 + \tr ZH - \frac{1}{2}\tr Z \tr H 
\right] \cos 2 \theta 
\\ & - \nonumber
2\im \left[-8(\tr Z +  2\alpha^2)p^2 + 2\tr ZH - \tr Z \tr H 
\right] \sin 2 \theta 
\\ & + \nonumber
\re \left[
16p^4 - 8(\tr Z + 2\alpha^2) p^2 + (\tr Z)^2 - 4\det(Z)
\right] \cos 4 \theta 
\\ & + \nonumber
\im \left[
-8(\tr Z + 2\alpha^2) p^2 + (\tr Z)^2 - 4\det(Z)
\right] \sin 4 \theta. 
\end{align}
Rewriting the right hand side of \eqref{pomegasq} in terms of the entries of $B$ we obtain, with the use of \eqref{ret}--\eqref{imt}:
\begin{align*}
    &  \left(p^2\sin^2\theta+\Omega(\theta)\right)^2-\Xi_2(\theta) 
    \\ &=   
    \frac{1}{32} \left[ 3 \re T - 4\re T \cos 2 \theta - 2\im T \sin 2 \theta + \re T \cos 4 \theta + \im T \sin 4 \theta \right].
\end{align*}
So, condition \eqref{spco}, equivalent to $T=0$, indeed implies the second equality in \eqref{eq1}.

Finally, from \eqref{xyz1}:  $z = x + (1 + v^2)$, and furthermore,
\begin{multline*}
z^2 - x^2 - y^2 = (1+v^2)^2 + 2(1+v^2)x - y^2 \\  =
\frac{(1+v^2)(\xi_1^2 + \xi_2^2)}{2} +
\frac{(1+2v^2)b^2}{4}
+ \left[u - \frac{(\eta_1+\eta_2) v}{2}\right]^2 +
\frac{v^2(\eta_1 - \eta_2)^2}{4}>0
\end{multline*}
since $b\neq 0$. Factorization \eqref{fact} therefore generates $C(A)$ consisting of two non-degenerate ellipses. \qed

\section{Follow up observations and examples} \label{s:ex}

Suppose that in \eqref{AB} the parameter $\alpha$ is real or pure imaginary. Criterion established in Section~\ref{s:lsc} can then be recast explicitly in terms of $B$. This is done in two theorems below, stated for $B$ as in \eqref{utb}. Note however that the results can be easily reworded without putting $B$ in a triangular form. Indeed, $\{b_1,b_2\}$ is the spectrum of $B$, while $b^2=\norm{B}_F-\abs{b_1}^2-\abs{b_2}^2$, with $\norm{.}_F$ denoting the Frobenius norm.

\begin{thm}\label{th:real}Let $\alpha (=u)\in\R$. Then the numerical range of matrix \eqref{AB1} is bi-elliptical if and only if $b\neq 0$  and either \\ {\em (i)} $\eta_1=\eta_2$ and $4b^2u^2=\left(\xi_1^2-\xi_2^2\right)^2$, or {\em (ii)} $u=\xi_1=\xi_2=0$.  \end{thm}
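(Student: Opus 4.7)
The plan is to specialize Theorem~\ref{th:spec} to $\alpha = u \in \R$, i.e.\ $v=0$. Since $B$ is put in upper triangular form \eqref{utb} with $b\geq 0$, a direct comparison of $BB^*$ with $B^*B$ shows that $B$ is normal iff $b=0$; hence the non-normality hypothesis of Theorem~\ref{th:spec} reduces to $b\neq 0$, matching the statement. The remaining task is to rewrite the condition $T=0$ (i.e.\ $\re T=\im T=0$, with $\re T,\im T$ as in \eqref{ret}--\eqref{imt}) directly in terms of the entries of $B$ when $v=0$.

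With $v=0$, formula \eqref{spr} gives the clean identity $4p^2=(\eta_1-\eta_2)^2+b^2$, and the first summand in \eqref{imt} vanishes, leaving $\im T = 4(\xi_1^2-\xi_2^2)(\eta_1-\eta_2)$. Thus $\im T=0$ forces one of two (possibly overlapping) sub-cases: $\eta_1=\eta_2$ or $\xi_1^2=\xi_2^2$. I would analyze $\re T=0$ in each sub-case separately.

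In the sub-case $\eta_1=\eta_2$, $4p^2=b^2$ collapses \eqref{ret} to
$$
\re T = (|b_1|^2+|b_2|^2)^2 - 4b^2u^2 - 4|b_1|^2|b_2|^2 = (|b_1|^2-|b_2|^2)^2 - 4b^2u^2,
$$
and since $|b_j|^2=\xi_j^2+\eta_1^2$ gives $|b_1|^2-|b_2|^2=\xi_1^2-\xi_2^2$, the condition $\re T=0$ is exactly criterion (i). In the sub-case $\xi_1^2=\xi_2^2$, substituting $|b_j|^2=\xi_1^2+\eta_j^2$ and $4p^2=(\eta_1-\eta_2)^2+b^2$ into \eqref{ret} and expanding, the $\xi_1^4$ and $\eta_1^2\eta_2^2$ contributions cancel and the surviving terms combine (via $\eta_1^2-2\eta_1\eta_2+\eta_2^2=(\eta_1-\eta_2)^2$) into
$$
\re T = -4\bigl[(\eta_1-\eta_2)^2(\xi_1^2+u^2) + u^2 b^2\bigr].
$$
Both bracketed summands are nonnegative, so $\re T=0$ requires each to vanish. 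Because $b\neq 0$, the second forces $u=0$; the first then either returns to $\eta_1=\eta_2$ (already covered by (i)) or yields $\xi_1=0$, which together with $\xi_1^2=\xi_2^2$ gives $\xi_2=0$ — precisely criterion (ii).

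No step is genuinely difficult; the only mildly delicate point is the algebraic simplification in the second sub-case, where one must commit to eliminating $\xi_2^2$ in favor of $\xi_1^2$ before expanding. The overlap of (i) and (ii) at $u=\xi_1=\xi_2=0$, $\eta_1=\eta_2$ is harmless and is absorbed by the "either/or" in the statement, so the case split is exhaustive and free of spurious solutions.
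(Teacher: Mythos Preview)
Your proof is correct and follows essentially the same route as the paper's: specialize Theorem~\ref{th:spec} to $v=0$, use \eqref{spr} to get $4p^2=(\eta_1-\eta_2)^2+b^2$, read off from \eqref{imt} that $\im T=0$ splits into $\eta_1=\eta_2$ or $\xi_1^2=\xi_2^2$, and simplify $\re T$ in each branch. The only cosmetic difference is that the paper makes the second branch disjoint by assuming $\eta_1\neq\eta_2$ there, whereas you keep the overlap and dispose of it at the end; your simplification $\re T=-4\bigl[(\eta_1-\eta_2)^2(\xi_1^2+u^2)+u^2b^2\bigr]$ agrees verbatim with the paper's expression.
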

\begin{proof} Since $v=0$, according to \eqref{imt} $\im T=0$ if and only if $\eta_1=\eta_2$ or $\xi_1^2=\xi_2^2$. On the other hand, $4p^2=(\eta_1-\eta_2)^2+b^2$ due to \eqref{spr}, and so \eqref{ret} 
takes the form \eq{ret1} \left( (\eta_1-\eta_2)^2-(\abs{b_1}^2+\abs{b_2}^2)\right)^2- 4u^2\left( (\eta_1-\eta_2)^2+b^2\right)-4\abs{b_1}^2\abs{b_2}^2. \en 
If $\eta_1=\eta_2$, then \eqref{ret1} vanishing amounts to
$\left(\abs{b_1}^2-\abs{b_2}^2\right)^2 = 4u^2 b^2$, which is exactly case (i).  If $\eta_1\neq\eta_2$, denote the coinciding values of $\xi_1^2$ and $\xi_2^2$ by $\xi^2$. Expression \eqref{ret1} then simplifies to \[-4\xi^2(\eta_1-\eta_2)^2-4u^2\left( (\eta_1-\eta_2)^2+b^2\right),\] and so it vanishes if and only if $u=\xi=0$. This is case (ii). 
\end{proof} 

\begin{cor}\label{th:za}Let $A$ be of the form \eqref{AB1} with zero main diagonal. Then $W(A)$ is bi-elliptical if and only if $b\neq 0$ and either \\ {\em (i)}  $\im b_1=\im b_2$ and  $\abs{b_1}=\abs{b_2}$, or {\em (ii)} $\re b_1=\re b_2=0$. 
\end{cor}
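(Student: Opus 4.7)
The plan is to derive this corollary as a direct specialization of Theorem~\ref{th:real}. Since $A$ has zero main diagonal, we have $\alpha=0$, hence in particular $\alpha\in\R$ with $u=v=0$, so Theorem~\ref{th:real} applies. The task then reduces to unpacking what cases (i) and (ii) of that theorem say when $u=0$ and expressing the answers in terms of $b_1, b_2$ rather than their real and imaginary parts.

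For case (i) of Theorem~\ref{th:real}, the equation $4b^2u^2=(\xi_1^2-\xi_2^2)^2$ with $u=0$ forces $\xi_1^2=\xi_2^2$. Combined with the assumption $\eta_1=\eta_2$, this is equivalent to $\xi_1^2+\eta_1^2=\xi_2^2+\eta_2^2$, i.e.\ $\abs{b_1}=\abs{b_2}$; so case (i) of Theorem~\ref{th:real} rewords as $\im b_1=\im b_2$ and $\abs{b_1}=\abs{b_2}$, matching case (i) here. For case (ii) of Theorem~\ref{th:real}, setting $u=0$ leaves only $\xi_1=\xi_2=0$, which is literally $\re b_1=\re b_2=0$, i.e.\ case (ii) here. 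The condition $b\neq 0$ (non-normality of $B$) carries over verbatim.

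There is no real obstacle; all the analytic work was already absorbed into Theorem~\ref{th:real}, so the proof is essentially a two-line substitution $u=0$ followed by a transparent rephrasing of the polynomial conditions in modulus/imaginary-part language. The only thing one should take care of is to verify the equivalence $\{\eta_1=\eta_2,\ \xi_1^2=\xi_2^2\}\iff\{\im b_1=\im b_2,\ \abs{b_1}=\abs{b_2}\}$ in both directions, which is immediate from $\abs{b_j}^2=\xi_j^2+\eta_j^2$.
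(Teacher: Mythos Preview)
Your proof is correct and follows exactly the intended route: the paper presents Corollary~\ref{th:za} without a separate proof, as an immediate specialization of Theorem~\ref{th:real} with $u=0$. Your verification that $\{\eta_1=\eta_2,\ \xi_1^2=\xi_2^2\}$ is equivalent to $\{\im b_1=\im b_2,\ \abs{b_1}=\abs{b_2}\}$ via $\abs{b_j}^2=\xi_j^2+\eta_j^2$ is precisely the rewording required.
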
 
Note that in \cite[Theorem 3.14]{Yeh} the result of Corollary~\ref{th:za} was established under additional restrictions $b=\sqrt{(1-\abs{b_1})^2(1-\abs{b_2})^2}$, $\abs{b_1},\abs{b_2}<1$. 

According to Corollary~\ref{th:za}, for $\alpha=0$ the value of $b$ is irrelevant (as long as it is different from zero, of course). The situation is quite the opposite for non-zero values of $\alpha$.
\begin{cor}\label{th:ub}Let $A$ be of the form \eqref{AB1} with $\alpha\neq 0$. Then there is at most one value of $b (>0)$ for which $W(A)$ is bi-elliptical. \end{cor}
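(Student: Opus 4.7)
By Theorem~\ref{th:spec}, the bi-elliptical condition is equivalent, after squaring, to $T(s)=0$ in the unknown $s=b^2$, with $T$ the complex polynomial from \eqref{t1}. A short direct calculation shows that for $B$ as in \eqref{utb} the determinant of $Z$ (defined in \eqref{HZB}) is independent of $b$: the $b^2$ contribution to $Z_{11}$ is exactly cancelled against $Z_{12}Z_{21}$. Hence $\tr Z$ is affine and $\det Z$ is constant in $s=b^2$, and combined with $4p^2=((\eta_1-\eta_2)^2+s)/(1+v^2)$ from \eqref{spr} this makes $T(s)$ a polynomial in $s$ of degree at most $2$ with leading coefficient $v^4/(1+v^2)^2$---real and nonnegative.

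When $v=0$, Theorem~\ref{th:real} applies directly: option (ii) is ruled out by $\alpha=u\neq 0$, and option (i) determines $b^2=(\xi_1^2-\xi_2^2)^2/(4u^2)$ uniquely. Suppose henceforth $v\neq 0$. From \eqref{imt}, $\im T(s)$ is a polynomial in $s$ of degree at most $1$ whose $s$-coefficient is proportional to $v(\eta_1+\eta_2)-2u$. If this is nonzero, then $\im T=0$ is a genuinely linear equation with a unique solution, giving at most one admissible $b$. Otherwise, the constant term of $\im T$ must also vanish, which together with $v(\eta_1+\eta_2)=2u$ forces either \textbf{(II.a)} $\eta_1=\eta_2=u/v$, or \textbf{(II.b)} $\xi_1^2=\xi_2^2$ with $\eta_1\neq\eta_2$. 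In both subcases $\im T\equiv 0$, and $T$ is a real quadratic in $s$ with positive leading coefficient.

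In (II.a) the substitution $4p^2=s/(1+v^2)$ into \eqref{ret} collapses $T$ to
\[
T(s)=\frac{v^4 s^2}{(1+v^2)^2}+\frac{2v^2(\xi_1^2+\xi_2^2)\,s}{1+v^2}+(\xi_1^2-\xi_2^2)^2,
\]
whose three coefficients are nonnegative; since $v\neq 0$, $T(s)>0$ for every $s>0$, so no positive root exists. In (II.b), setting $d=\eta_1-\eta_2\neq 0$ and using $\eta_1+\eta_2=2u/v$ to simplify $|b_1|^2|b_2|^2$, the same substitution produces
\[
T(0)=-d^2\!\left[\frac{d^2 v^2}{(1+v^2)^2}+\frac{4\xi_1^2}{1+v^2}\right]<0.
\]
A positive leading coefficient combined with a strictly negative constant term forces, by Vieta's formulas, exactly one positive and one negative real root, so again at most one admissible $b$. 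Combining all cases yields the corollary.

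The main obstacle is the algebra in subcase (II.b): the compact expression for $T(0)$ emerges only after cleanly combining the squared term in \eqref{ret} with $-4|b_1|^2|b_2|^2$, where the constraint $\eta_1+\eta_2=2u/v$ is essential to effect the cancellations. The rest of the argument is essentially linear bookkeeping once $\im T$ has been analyzed and its vanishing locus pinned down.
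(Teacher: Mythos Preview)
Your proof is correct. The claim that $\det Z$ is independent of $b$, the explicit formula for $T(s)$ in subcase (II.a), and the formula for $T(0)$ in subcase (II.b) all check out (the last one does require the constraint $\eta_1+\eta_2=2u/v$, as you note). The sign analysis in each subcase is sound.

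Your route differs from the paper's. For $v\neq 0$ the paper argues in one line that $\re T=0$, viewed as a quadratic in $b^2$, has a non-positive root and hence at most one positive one; no use is made of $\im T$. You instead exploit $\im T$ first: generically it is a genuine linear equation in $s=b^2$, pinning $s$ down immediately, and only on the codimension-one locus $v(\eta_1+\eta_2)=2u$ do you fall back to a case-by-case study of the real quadratic $T(s)$. What your approach buys is a completely explicit, self-contained argument---in (II.a) you actually show no positive root exists, and in (II.b) you locate exactly one---whereas the paper's version is much shorter but leaves the ``one non-positive root'' claim unjustified. What the paper's approach buys is economy: if one is willing to accept (or separately verify) that $\re T=0$ can never have two positive roots when $v\neq 0$, the corollary follows without any case analysis at all.

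One small wording point: when you write ``Otherwise, the constant term of $\im T$ must also vanish,'' you are implicitly using that if it does not vanish then there is no solution at all, which already gives the ``at most one'' conclusion. This is fine but could be said explicitly. The closing paragraph is commentary rather than proof and could be dropped.
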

\begin{proof}For $\alpha\in\R$ the result follows from Theorem~\ref{th:real}, with an explicit expression for $b$. For non-real values of $\alpha$, observe that condition $\re T=0$ with the use of \eqref{spr} and \eqref{ret} yields a quadratic equation for $b^2$ with one non-positive root, and thus a unique positive one. 
\end{proof}
It is possible to derive the explicit value of $b$ for non-real $\alpha$ as well, but the expression is somewhat cumbersome. We will restrict our attention to another special case, when $\alpha$ is pure imaginary.
\begin{thm}\label{th:im}Let $\alpha$ be pure imaginary: $\alpha=iv\neq 0$. Then the numerical range of matrix \eqref{AB1} is bi-elliptical if and only if $b\neq 0$,\eq{recri} \abs{b_1}=\abs{b_2}, \text{ and } v^2b^2=(\eta_1-\eta_2)^2.\en  \end{thm}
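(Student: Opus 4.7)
The plan is to apply Theorem~\ref{th:spec}, whose spread condition \eqref{spco} is equivalent to $T=0$ with $T$ as in \eqref{t1}, and to split $T=0$ into its real and imaginary parts \eqref{ret}--\eqref{imt} under the simplification $u=0$ forced by $\alpha=iv$. Since $B$ in upper triangular form \eqref{utb} is normal precisely when $b=0$, the non-normality hypothesis of Theorem~\ref{th:spec} yields the first conjunct $b\neq 0$ of \eqref{recri}. Formula \eqref{ret} with $u=0$ reduces $\re T=0$ to $4p^2-b^2=(\abs{b_1}\pm\abs{b_2})^2$, and combining this with \eqref{spr} gives
\[ (\eta_1-\eta_2)^2=v^2b^2+(1+v^2)(\abs{b_1}\pm\abs{b_2})^2. \]
The ``$+$'' branch is excluded immediately, because $(\abs{b_1}+\abs{b_2})^2\geq(\abs{\eta_1}+\abs{\eta_2})^2\geq(\eta_1-\eta_2)^2$ while $v^2b^2>0$, so the right side would strictly exceed the left; hence the ``$-$'' sign must occur.

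To force $\abs{b_1}=\abs{b_2}$, I would next exploit $\im T=0$ from \eqref{imt}. Writing $\xi_1^2-\xi_2^2=(\abs{b_1}^2-\abs{b_2}^2)-(\eta_1+\eta_2)(\eta_1-\eta_2)$, substituting $4p^2=b^2+\mu^2$ with $\mu=\abs{b_1}-\abs{b_2}$, and invoking the identity $(\eta_1-\eta_2)^2-v^2(b^2+\mu^2)=\mu^2$ obtained above, a short calculation collapses $\im T=0$ to
\[ (\abs{b_1}^2-\abs{b_2}^2)(\eta_1-\eta_2)=(\eta_1+\eta_2)(\abs{b_1}-\abs{b_2})^2. \]
If $\abs{b_1}\neq\abs{b_2}$, dividing gives $(\abs{b_1}+\abs{b_2})(\eta_1-\eta_2)=(\eta_1+\eta_2)(\abs{b_1}-\abs{b_2})$; squaring this and inserting $(\eta_1-\eta_2)^2=v^2b^2+(1+v^2)(\abs{b_1}-\abs{b_2})^2$ produces
\[ (\abs{b_1}+\abs{b_2})^2v^2b^2=(\abs{b_1}-\abs{b_2})^2\bigl[(\eta_1+\eta_2)^2-(1+v^2)(\abs{b_1}+\abs{b_2})^2\bigr]. \]
Since $(\eta_1+\eta_2)^2\leq(\abs{\eta_1}+\abs{\eta_2})^2\leq(\abs{b_1}+\abs{b_2})^2<(1+v^2)(\abs{b_1}+\abs{b_2})^2$, the bracket is strictly negative while the left side is strictly positive --- a contradiction. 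So $\abs{b_1}=\abs{b_2}$, whence $\mu=0$ and the ``$-$'' branch equation collapses to $v^2b^2=(\eta_1-\eta_2)^2$.

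The converse is a direct verification: $\abs{b_1}=\abs{b_2}$ forces $\xi_1^2-\xi_2^2=-(\eta_1+\eta_2)(\eta_1-\eta_2)$, and $v^2b^2=(\eta_1-\eta_2)^2$ together with \eqref{spr} yields $4p^2=b^2$; substituting these into \eqref{ret} and \eqref{imt} makes each one vanish. The main obstacle I anticipate is the bookkeeping with the two sign choices in $\re T=0$: one must dispose of the ``$+$'' branch separately by the elementary inequalities $\abs{\eta_j}\leq\abs{b_j}$, and the strict inequalities in the $\im T=0$ step rely crucially on $v\neq 0$ and $b\neq 0$.
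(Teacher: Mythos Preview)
Your proof is correct and follows essentially the same architecture as the paper's: reduce the bi-elliptical criterion to $T=0$ via Theorem~\ref{th:spec}, specialize \eqref{ret}--\eqref{imt} to $u=0$, exclude the ``$+$'' branch of $\re T=0$ by the elementary bound $(\eta_1-\eta_2)^2\le(|b_1|+|b_2|)^2$, and then show $|b_1|\neq|b_2|$ is incompatible with $\im T=0$. The only notable difference is in this last contradiction step: the paper rewrites $\im T=0$ as $(|b_1|-|b_2|)(\eta_1|b_2|-\eta_2|b_1|)=0$, infers from $\eta_1|b_2|=\eta_2|b_1|$ that $|\eta_1-\eta_2|\le\bigl||b_1|-|b_2|\bigr|$, and feeds this back into $4p^2-b^2<(\eta_1-\eta_2)^2$; you instead square the linear relation $(|b_1|+|b_2|)(\eta_1-\eta_2)=(\eta_1+\eta_2)(|b_1|-|b_2|)$ and obtain a direct sign contradiction. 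Your route is purely algebraic and sidesteps the small case analysis about arguments of $b_1,b_2$, at the cost of one extra substitution; both are short and equally valid.
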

\begin{proof} Setting $u=0$ in \eqref{ret} and \eqref{imt}, we see that conditions $\re T=0$ and $\im T=0$ simplify respectively to 
\eq{pb} 4p^2-b^2=\left(\abs{b_1}\pm\abs{b_2}\right)^2 \en	
and 	
\eq{pv} 4v^2p^2(\eta_1 + \eta_2)+ (\xi_1^2-\xi_2^2)(\eta_1 - \eta_2)=0.\en	
Moreover, due to \eqref{spr} and \eqref{pb}: 
\[ 4v^2p^2=(\eta_1-\eta_2)^2-(4p^2-b^2)=(\eta_1-\eta_2)^2-\left(\abs{b_1}\pm\abs{b_2}\right)^2. \]
Plugging this expression for $4v^2p^2$ into \eqref{pv}, after some additonal simplifications we arrive at 
\eq{pro} \left(\abs{b_1}\pm\abs{b_2}\right)\left(\eta_1\abs{b_2}\pm\eta_2\abs{b_1}\right)=0, \en
with the signs matching that in \eqref{pb}. 
Note that the system of conditions \eqref{pb},\eqref{pv} is equivalent to \eqref{pb},\eqref{pro}, with $p$ given by \eqref{spr}. 

The sufficiency of \eqref{recri} for \eqref{spco} to hold is now immediate. To prove the necessity, 
observe that from \eqref{spr}, since $b\neq 0$:
\[ 4p^2-b^2=\frac{(\eta_1-\eta_2)^2-v^2b^2}{1+v^2}<(\eta_1-\eta_2)^2
\leq (\abs{\eta_1}+\abs{\eta_2})^2\leq \left(\abs{b_1}+\abs{b_2}\right)^2, \] 	
and therefore \eqref{pb} holds with the lower sign. Then so does \eqref{pro}. 

Suppose now that $\abs{b_1}\neq\abs{b_2}$. Condition \eqref{pro} then boils down to $\eta_1\abs{b_2}=\eta_2\abs{b_1}$, which in its turn implies that $\eta_1,\eta_2$ are of the same sign, and $\arg b_1=\arg b_2$ or $\arg b_1=\pi-\arg b_2$. The above inequality for $4p^2-b^2$ can therefore be strengthened as follows:  
\[ 4p^2-b^2=\frac{(\eta_1-\eta_2)^2-v^2b^2}{1+v^2}<(\eta_1-\eta_2)^2
=(\abs{\eta_1}-\abs{\eta_2})^2\leq \left(\abs{b_1}-\abs{b_2}\right)^2. \] 		
This is a contradiction with \eqref{pb}. 

So, in fact $\abs{b_1}=\abs{b_2}$, and \eqref{pb} (with the correct choice of the sign) implies $4p^2-b^2=0$,  thus leading to \eqref{recri}.  \end{proof}

\begin{figure}[H]
    \centering
    \includegraphics[scale=0.50]{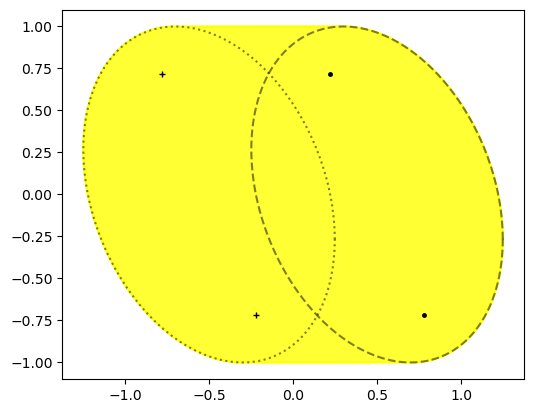}
    \includegraphics[scale=0.50]{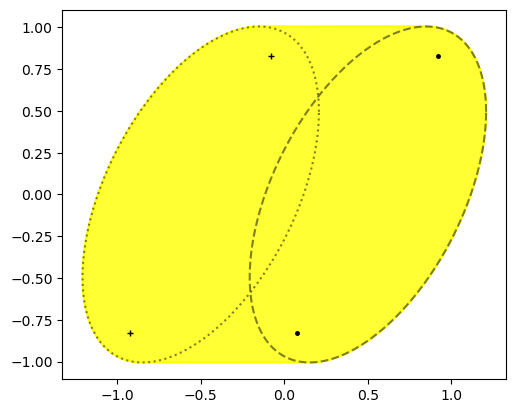}
    \label{fig:u0andv0}
        \caption
        {
        \textbf{On the left -} an example illustrating theorem \ref{th:real} with 
        $\alpha = \frac{1}{10}$, $b=1$, 
        $b_1 = \frac{1}{5}\left(3 - i \right)$  
        and
        $b_2 = \frac{1}{5}\left(2 - i \right)$  
        .
        \textbf{On the right -} an example illustrating theorem \ref{th:im} with 
        $\alpha = \frac{1}{10}i$, $b=1$, 
        $b_1 = \frac{1}{10}\left(3 + 4i \right)$  
        and 
        $b_2 = \frac{1}{10}\left(4 + 3i \right)$ 
        .
        }
\end{figure}

\section{General case} \label{s:gen}
We now return to arbitrary matrices \eqref{A}, not supposing a priori the special form \eqref{AB}. 

\begin{thm}\label{th:gen} Let $A$ be of the form \eqref{A}, with the usual convention $\beta=-\alpha$. Then $W(A)$ is bi-elliptical if and only if \\ {\em (i)} there exists $\theta$ for which $e^{-i\theta}C-e^{i\theta}D^*$ is a scalar multiple of a unitary matrix while  $(e^{-i\theta}C-e^{i\theta}D^*)D$ is not normal, and\\ {\em (ii)} for this value of $\theta$, 

\eq{gen} 4\sqrt{\det\left(\im(e^{-i\theta}A) \right)}(\sigma_1(\theta)+\sigma_2(\theta))^2=(s_1(\theta)-s_2(\theta))^2. \en 
\end{thm}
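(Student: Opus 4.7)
The plan is to reduce the general case to Theorem~\ref{th:spec} by three structure-preserving operations: the rotation $A\mapsto e^{-i\theta}A$, a block-diagonal unitary similarity $A\mapsto\diag(V_1^*,V_2^*)A\diag(V_1,V_2)$, and a positive scaling $A\mapsto sA$ with $s>0$. Each preserves the block form \eqref{A} (since scalar diagonal blocks are untouched by unitary conjugation), and each preserves the bi-elliptical property of $W(A)$ up to a Euclidean similarity of the plane.

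For sufficiency, assume (i) and write $e^{-i\theta}C-e^{i\theta}D^*=\gamma U$ with $U$ unitary and $\gamma\in\C\setminus\{0\}$. Rotating $A$ by $e^{-i\theta}$, applying the similarity with $V_1=I$ and $V_2=e^{-i\arg\gamma}U^*$, and finally scaling by $s=2/|\gamma|$ produces a matrix $\tilde A$ of the form \eqref{AB} whose off-diagonal blocks satisfy $\tilde C-\tilde D^*=2I$; the corresponding $B=\tilde D+I$ is then the special-case building block of Section~\ref{s:lsc}. Since $(e^{-i\theta}C-e^{i\theta}D^*)D$ is, up to a nonzero scalar and a unitary similarity, equal to $2(B-I)$, the non-normality in (i) translates to non-normality of $B$. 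A direct calculation tracking how $\sqrt{\det\im(e^{-i\theta}A)}$, $(\sigma_1(\theta)+\sigma_2(\theta))^2$, and $(s_1(\theta)-s_2(\theta))^2$ transform under the reduction shows that \eqref{gen} is equivalent to condition \eqref{spco} of Theorem~\ref{th:spec} applied to $\tilde A$. Theorem~\ref{th:spec} then supplies the bi-ellipticity of $W(\tilde A)$, and hence of $W(A)$.

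For necessity, assume $W(A)$ is bi-elliptical. By Proposition~\ref{th:nec} and central symmetry, the two ellipses are congruent and centered at $\pm w$ for some $w\ne 0$; let $\theta_0=\arg w$. Because the two ellipses have the same size and orientation, the factorization \eqref{fact} extends to
\[
P_A(\lambda,\theta)=\bigl((\lambda+|w|\sin(\theta-\theta_0))^2+\Omega(\theta)\bigr)\bigl((\lambda-|w|\sin(\theta-\theta_0))^2+\Omega(\theta)\bigr),
\]
with $\Omega(\theta)=x\cos 2\theta+y\sin 2\theta-z$ satisfying $z\ge\sqrt{x^2+y^2}$. At $\theta=\theta_0$ the right-hand side collapses to the perfect square $(\lambda^2+\Omega(\theta_0))^2$, forcing $\lambda_1(\theta_0)=\lambda_2(\theta_0)$; by Lemma~\ref{th:leig}(ii) this is equivalent to $\mu_1(\theta_0)=\mu_2(\theta_0)$, i.e., to the Hermitian matrix $H-2\re(e^{-2i\theta_0}Z)=(e^{i\theta_0}C^*-e^{-i\theta_0}D)(e^{-i\theta_0}C-e^{i\theta_0}D^*)$ being a scalar multiple of $I$. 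Equivalently, $e^{-i\theta_0}C-e^{i\theta_0}D^*$ is a scalar multiple of a unitary matrix. Normality of $(e^{-i\theta_0}C-e^{i\theta_0}D^*)D$ would place $\tilde A$ under the hypotheses of \cite[Theorem~4.1]{GeS} and yield \emph{concentric} ellipses, contradicting non-concentricity; hence (i) holds at $\theta_0$. Applying the reduction and invoking the necessity direction of Theorem~\ref{th:spec} supplies \eqref{spco} for $\tilde A$, which the same bookkeeping converts back to \eqref{gen}.

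The main obstacle is precisely this bookkeeping: one must verify that the three operations transform $\sqrt{\det\im(e^{-i\theta}A)}$, $(\sigma_1(\theta)+\sigma_2(\theta))^2$, and $(s_1(\theta)-s_2(\theta))^2$ with compatible powers of $s$ (and are unaffected by the block-diagonal unitary similarity), so that \eqref{gen} and \eqref{spco} correspond exactly. The prefactor $4$ and the appearance of $\sqrt{\det\im(e^{-i\theta}A)}$ in \eqref{gen} both arise from the scaling $s=2/|\gamma|$ applied to normalize $C-D^*$ to the canonical value $2I$ in the reduced form.
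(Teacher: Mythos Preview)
Your approach is essentially the paper's own: reduce to the leading special case \eqref{AB} via rotation by $e^{-i\theta}$, a block-diagonal unitary similarity, and scaling, then invoke Theorem~\ref{th:spec} and verify that \eqref{gen} becomes \eqref{spco} in the reduced form (the paper checks this directly via $\sqrt{\det\im A}=1+v^2$ and $s_1-s_2=2(\beta_1-\beta_2)$). One small point to tighten: in the necessity direction you should rule out $\gamma=0$ \emph{before} constructing $\tilde A$ (the paper does this by observing that $e^{-i\theta}C=e^{i\theta}D^*$ forces $Z$ normal and commuting with $H$, hence concentric ellipses), since your non-normality argument presupposes $\tilde A$ exists.
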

Here $\pm\sigma_1(\theta),\pm\sigma_2(\theta)$ are the eigenvalues of $e^{-i\theta}A$ (so that $\sigma_j(0)=\sigma_j$, $j=1,2$) and $s_1(\theta),s_2(\theta)$ are the non-repeating eigenvalues of $\im\left((e^{-i\theta}A)^2\right)$.  

It will become clear from the proof of Theorem~\ref{th:gen} that $\theta$ satisfying condition (i) is unique $\hspace{-.25cm}\mod\pi$. Also, if (i) holds then the eigenvalues $\pm\lambda_1(\theta),\pm\lambda_2(\theta)$ of $\im(e^{-i\theta}A)$ actually satisfy $\lambda_1(\theta)=\lambda_2(\theta)$ and, if (ii) also holds, then $\theta=\arg(\sigma_1+\sigma_2)\mod\pi$. So, with an appropriate choice of the signs condition \eqref{gen} can be rewritten as \[ \left(\lambda_1(\theta)+\lambda_2(\theta)\right)\left(\sigma_1(\theta)+\sigma_2(\theta)\right)
=s_1(\theta)-s_2(\theta). \]
Finally, condition $s_1(\theta)\neq s_2(\theta)$ follows from 
$(e^{-i\theta}C-e^{i\theta}D^*)D$ not being normal. 

\begin{proof} To establish necessity of (i), choose $e^{i\theta}$ as the direction of the line segments on $\partial W(A)$, as guaranteed by Proposition~\ref{th:nec}. The matrix $\im(e^{-i\theta}A)$ then must have repeated eigenvalues. According to Lemma~\ref{th:leig}(ii), this happens if and only if the 2-by-2 hermitian matrix defined by \eqref{uni} has coinciding eigenvalues, and is therefore a scalar multiple of the identity. 
This, in turn, is equivalent to $e^{-i\theta}C-e^{i\theta}D^*$ being a scalar multiple of a unitary matrix.

This scalar multiple cannot be zero, since otherwise $Z=e^{2i\theta}C^*C$ while $H=2C^*C$, and so $Z$ is a normal matrix commuting with $H$. As was already mentioned in the proof of Theorem~\ref{th:spec}, this situation corresponds to $W(A)$ being the convex hull of two concentric ellipses, and thus leads to a contradiction. 

Both condition \eqref{gen} and the shape of $W(A)$ persist under scaling of $A$. So, we may for the rest  of the proof suppose that $\theta=0$ and $C^*-D=2W$ where $W$ is unitary. Furthermore, 
a unitary similarity of $A$ via $\diag[I,W]$ leaves condition \eqref{gen} invariant, while replacing  $C$ by $C_1=CW$ and $D$ by $D_1=W^*D$. Since $C_1^*-D_1= W^*(C^*-D)=2I$, it suffices to consider the case $W=I$. 

This brings us into the setting of Theorem~\ref{th:spec}, with $B= D_1+I (= C_1^*-I)$. Since this $B$ is (or is not) normal simultaneously with $(e^{-i\theta}C-e^{i\theta}D^*)D$, condition (i) is indeed necessary. 

Supposing that (i) holds, it only remains to show that \eqref{gen} with $\theta=0$ is \eqref{spco} in disguise. But indeed, for $A$ as in \eqref{AB}:
\[ 1+v^2=\sqrt{\det(\im A)}, \] while $\im (A^2)$ is the direct sum of two copies of $\im(\alpha^2)I+2\im B$, and so 
$s_1(0)-s_2(0)=2(\beta_1-\beta_2)$. 
\end{proof} 
\noindent \textbf{Example.}
	Let
	\[
	A = \begin{bmatrix} \label{eq:exgenA}
		4 & 0 & 4-8i & 0 \\
		0 & 4 &  5-5i & 4-12i \\
		-2+6i & 5-5i & -4 & 0 \\
		0 & 2+6i & 0 & -4
	\end{bmatrix}.
	\]
The matrix $e^{-i\theta}C-e^{i\theta}D^*$ is upper triangular, and so it can be normal only if its lower left entry is also zero. This requirement is equivalent to $\theta=-\pi/4\mod\pi$. On the other hand, with this choice of $\theta$ indeed $e^{-i\theta}C-e^{i\theta}D^*=\pm 10\sqrt{2}I$ is a scalar multiple of a unitary matrix (in this case, even the identity). Moreover, since $D$ is not normal, the product $(e^{-i\theta}C-e^{i\theta}D^*)D$ is not normal either. So, condition (i) of Theorem~\ref{th:gen} holds.

Furthermore, since 
\begin{align*}
	 e^{i\pi/4} A = 
	\sqrt{2} 
	\begin{bmatrix}
		2+ 2i & 0 & 6-2i & 0 \\
		0 & 2+ 2i & 5 & 8-4 i \\
		- 4+2i & 5 & -2- 2i & 0 \\
		0 & -2+4 i & 0 & -2- 2i
	\end{bmatrix}, 
\end{align*} somewhat lengthy but direct computations yield: $\sigma_1(-\pi/4)+\sigma_2(-\pi/4) = 5\sqrt{2}$, $s_1(-\pi/4) - s_2(-\pi/4) = 20\sqrt{29}$, and $\sqrt{\det\left( (\im(e^{i\pi/4}A)\right)}= 58$. 
So, condition (ii) of Theorem~\ref{th:gen} also holds, and $W(A)$ is bi-elliptical.

\begin{figure}[H]
	    \centering
	    \includegraphics[scale=0.6]{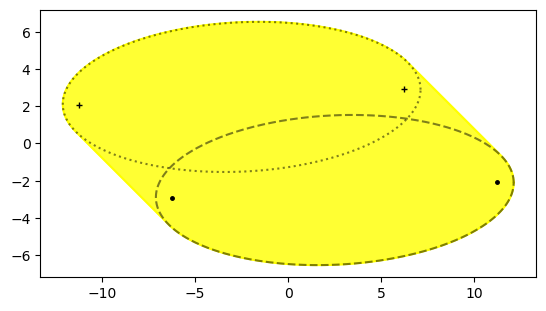}
	    \label{fig:gen}
	    \caption{The numerical range of $A$ from the Example.
	    }
	\end{figure}

The bi-elliptical shape of $W(A)$ in the setting of Theorem~\ref{th:gen} means that in fact $W(A)=W(A_1\oplus A_2)$, where 
$A_1=-A_2\in\C^{2\times 2}$, $\sigma(A_1)=\{\sigma_1,\sigma_2\}$, and $\norm{A_1}_F^2=\norm{A}_F^2/2$. The following observation is therefore non-trivial and thus of some interest. 
\begin{prop}\label{th:irred}Any matrix \eqref{A} satisfying condition {\em (i)} of Theorem~\ref{th:gen} is unitarily irreducible. \end{prop}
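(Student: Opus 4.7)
My plan is to reduce to a canonical tridiagonal form and then establish unitary irreducibility directly.

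First, I would observe that the reductions used in the proof of Theorem~\ref{th:gen}---scaling by $e^{-i\theta}$, the block-diagonal unitary similarity via $\diag[I,W]$, the further block-diagonal similarity putting $B$ into the upper triangular form \eqref{utb}, and the permutational similarity converting \eqref{AB1} into the tridiagonal matrix $A_0$ appearing in the proof of Theorem~\ref{th:spec}---are all unitary transformations (up to a unimodular scalar), and therefore preserve unitary irreducibility. Under condition~(i) of Theorem~\ref{th:gen} these steps are all available, so it suffices to show that the resulting $A_0$ is unitarily irreducible. Because $B$ is not normal, the middle off-diagonal entries $A_0(2,3)=A_0(3,2)=b$ are nonzero; the outer off-diagonal pairs $\{\overline{b_j}+1,\,b_j-1\}$, $j=1,2$, each contain at least one nonzero entry, so $A_0$ is \emph{proper} tridiagonal in the terminology of \cite{BS041}.

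Next, suppose for contradiction that a nontrivial orthogonal projection $P$ commutes with $A_0$ (and hence also with $A_0^*$, since $P=P^*$). Viewing $A_0$ as $2\times 2$ block-tridiagonal with diagonal blocks $\Lambda_j=\begin{bmatrix}\alpha & \overline{b_j}+1 \\ b_j-1 & -\alpha\end{bmatrix}$ and rank-one coupling blocks $E_1=be_2e_1^T$, $E_2=be_1e_2^T$, and writing $P=(P_{ij})_{i,j=1}^{2}$ in the same block form, the commutation relations $[P,A_0]=[P,A_0^*]=0$ yield eight block-matrix equations. Since $E_1^*=E_2$ and $E_2^*=E_1$, the right-hand sides of the two equations at each block position coincide, and subtracting them gives $[P_{ij},\Delta]=0$ in all four positions, where $\Delta:=\Lambda_j-\Lambda_j^*=\begin{bmatrix}2iv & 2 \\ -2 & -2iv\end{bmatrix}$ is \emph{independent} of $j$. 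Since $\Delta$ has two distinct eigenvalues $\pm 2i\sqrt{v^2+1}$, a single unitary $U$ simultaneously diagonalizes every $P_{ij}$, so that after conjugating by $U\oplus U$ the projection $P$ becomes a diagonal matrix on $\mathbb{C}^4$. The nonvanishing of $b$ together with properness of the outer off-diagonal pairs then keeps the graph of nonzero entries of the transformed $A_0$ connected, so commutation of the diagonal $P$ with it requires all four diagonal entries of $P$ to coincide. Hence $P$ is scalar, contradicting its non-triviality.

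The main obstacle is this last connectedness claim: one must verify that conjugation by $U\oplus U$ does not annihilate too many entries of the outer diagonal blocks $U^*\Lambda_jU$ and the transformed couplings $U^*E_jU$. Degenerate cases in which a transformed entry vanishes demand additional care, but are ruled out by properness and the non-normality of $B$ that underlies $b\neq 0$; in particular, the rank-one structure of $E_1,E_2$ guarantees that at least one entry of each coupling survives, linking the two halves of the new basis and completing the contradiction.
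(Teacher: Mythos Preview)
Your reduction to the tridiagonal $A_0$ and the observation that $\operatorname{Im}A_0$ is block-scalar (so that every block $P_{ij}$ of a reducing projection commutes with the common $2\times 2$ matrix $\Delta$) are both correct; indeed, this is equivalent to saying $A_0-A_0^*=\operatorname{diag}[\Delta,\Delta]$, and the paper's proof uses exactly the same fact (invariance under $\operatorname{Im}A$). The gap is the next sentence: ``after conjugating by $U\oplus U$ the projection $P$ becomes a diagonal matrix on $\mathbb{C}^4$.'' Diagonality of each $2\times 2$ block $P_{ij}$ does \emph{not} make $P$ diagonal. It only gives $P$ the pattern
\[
\tilde P=\begin{bmatrix} * & 0 & * & 0\\ 0 & * & 0 & *\\ * & 0 & * & 0\\ 0 & * & 0 & *\end{bmatrix},
\]
i.e.\ after the permutation $(1,3,2,4)$ it becomes $Q_1\oplus Q_2$ with \emph{arbitrary} Hermitian $2\times 2$ blocks $Q_1,Q_2$. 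Your subsequent ``connected graph'' argument is the standard one for a \emph{diagonal} commutant ($(\tilde p_i-\tilde p_j)(\tilde A_0)_{ij}=0$), and it simply does not apply to a commutant of this block form. Nothing in your final paragraph addresses this; the ``degenerate cases'' you worry about are a separate (and secondary) issue.

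This is precisely the obstacle the paper's proof is designed to overcome. Having reached the stage where a reducing subspace is invariant under the two eigenspaces of $\operatorname{Im}A$ (your $Q_1\oplus Q_2$ decomposition), the paper brings in an additional invariant coming from $A^2$: a reducing subspace must also be invariant under $\operatorname{diag}[B^*B,BB^*]$, whose spectral projections $P_1,P_2$ differ because $B$ is not normal. Only by playing these off against the $\operatorname{Im}A$-invariance (and repeatedly invoking Lemma~\ref{th:aux} to handle vectors that fall into $\mathcal L_\pm$) can one force the subspace to be all of $\mathbb{C}^4$. Your argument, as written, stops one commutation relation short of what is needed.
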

In what follows, denote by $\mathcal L_\pm$ the span of $e_1,e_2$ (resp., $e_3,e_4$) --- the first/last two standard basis vectors of $\C^4$. The following auxiliary statement will be used repeatedly. 
\begin{lem}\label{th:aux}Let a reducing subspace $\mathcal L$ of the matrix \eqref{AB} have a non-trivial intersection with $\mathcal L_+$ or  $\mathcal L_-$. Then $\mathcal L=\C^4$. \end{lem}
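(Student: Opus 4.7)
The plan is to fix a reducing subspace $\mathcal{L}$ with $\mathcal{L}\cap\mathcal{L}_+\neq\{0\}$ (the $\mathcal{L}_-$ case is symmetric after swapping the roles of $C$ and $D$), and translate the hypothesis into a statement about a subspace of $\C^2$ that is reducing for $B$. To this end I would introduce
\[
\mathcal{M}_+ = \{u\in\C^2 : (u,0)^T\in\mathcal{L}\},\qquad \mathcal{M}_- = \{v\in\C^2 : (0,v)^T\in\mathcal{L}\},
\]
so by assumption $\mathcal{M}_+\neq\{0\}$.

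The first main step is to show $\mathcal{M}_+=\mathcal{M}_-$ and that this common subspace $\mathcal{M}$ is invariant under both $B$ and $B^*$. For $u\in\mathcal{M}_+$, the vector $A(u,0)^T=(\alpha u,(B-I)u)^T$ lies in $\mathcal{L}$, and subtracting $\alpha(u,0)^T\in\mathcal{L}$ puts $(0,(B-I)u)^T$ into $\mathcal{L}$. Applying $A^*$ in the same way gives $(0,(B+I)u)^T\in\mathcal{L}$. Taking half of the sum and half of the difference then yields $(0,u)^T,(0,Bu)^T\in\mathcal{L}$, so $\mathcal{M}_+\subseteq\mathcal{M}_-$ and $B\mathcal{M}_+\subseteq\mathcal{M}_-$. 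Repeating the argument with $A$ and $A^*$ applied to $(0,v)^T$ (using that $(B^*+I)v$ and $(B^*-I)v$ both land in $\mathcal{M}_+$) gives $\mathcal{M}_-\subseteq\mathcal{M}_+$ and $B^*\mathcal{M}_-\subseteq\mathcal{M}_+$. Thus $\mathcal{M}_+=\mathcal{M}_-=:\mathcal{M}$ and $B\mathcal{M}\subseteq\mathcal{M}$, $B^*\mathcal{M}\subseteq\mathcal{M}$, i.e., $\mathcal{M}$ is a reducing subspace of $B$.

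The second step is to exclude $\dim\mathcal{M}=1$. Since the lemma is applied in the setting where $B$ is not normal (the only situation in which the matrix \eqref{AB} is relevant for Proposition~\ref{th:irred}, and as used throughout Theorem~\ref{th:spec}), a 1-dimensional reducing subspace of the $2\times2$ matrix $B$ would produce a simultaneous eigenvector of $B$ and $B^*$; its orthogonal complement would be another such subspace, giving an orthonormal eigenbasis and forcing $B$ to be normal, a contradiction. Since $\mathcal{M}\neq\{0\}$, this leaves $\mathcal{M}=\C^2$, whence $\mathcal{L}_+\cup\mathcal{L}_-\subseteq\mathcal{L}$ and $\mathcal{L}=\C^4$.

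The most delicate point is really bookkeeping: making sure that each application of $A$ or $A^*$ to a vector supported in one of $\mathcal{L}_\pm$ yields a vector whose ``wrong'' component can be cancelled out using another element already in $\mathcal{L}$. This is what forces the inclusions between $\mathcal{M}_+$ and $\mathcal{M}_-$ in both directions rather than a one-sided statement; everything else follows quickly once $\mathcal{M}$ is identified as a reducing subspace of $B$.
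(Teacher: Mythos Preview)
Your proof is correct and follows essentially the same approach as the paper's: both arguments apply $A$ and $A^*$ to vectors supported in $\mathcal{L}_\pm$, take sums and differences to isolate $(0,u)^T$, $(0,Bu)^T$, $(v,0)^T$, $(B^*v,0)^T$, and then invoke the fact that a non-normal $2\times 2$ matrix $B$ has no nontrivial reducing subspace. The only difference is packaging: the paper traces a single nonzero vector $x$ and shows $\operatorname{Span}\{x,Bx,B^*x\}=\C^2$, whereas you phrase the same computation in terms of the subspaces $\mathcal{M}_\pm$ and identify $\mathcal{M}$ as reducing for $B$; the underlying calculations and the use of non-normality are identical.
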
 
\begin{proof} For any $x\in\C^2$
	\[  A\begin{bmatrix}x\\ 0\end{bmatrix} =\begin{bmatrix}\alpha x\\ Bx-x\end{bmatrix}, \quad  A^*\begin{bmatrix}x\\ 0\end{bmatrix} =\begin{bmatrix}\overline{\alpha} x\\ Bx+x\end{bmatrix}\] and  
	\[ A\begin{bmatrix}0\\ x\end{bmatrix} =\begin{bmatrix}B^*x+x\\ -\alpha x\end{bmatrix},\quad  A^*\begin{bmatrix}0\\ x\end{bmatrix} =\begin{bmatrix}B^*x-x\\ -\overline{\alpha}x\end{bmatrix}.\]
So, if for some $x\in\C^2$ the subspace $\mathcal L$ contains one of the vectors $\begin{bmatrix}x\\ 0\end{bmatrix}$ or $\begin{bmatrix}0\\ x\end{bmatrix}$, it also  contains the other one, as well as $\begin{bmatrix}B^*x\\ 0\end{bmatrix}$ and  $\begin{bmatrix}0\\ Bx\end{bmatrix}$. Applying this observation to $Bx$ and $B^*x$ in place of $x$, we conclude that $\begin{bmatrix}0\\ B^*x\end{bmatrix}$, $\begin{bmatrix}Bx\\ 0\end{bmatrix}$ also lie in $\mathcal L$, and thus 
\[ \mathcal L\supseteq\Span\left\{\begin{bmatrix}x \\ 0\end{bmatrix}, \begin{bmatrix}Bx \\ 0\end{bmatrix},\begin{bmatrix}B^*x \\ 0\end{bmatrix},
\begin{bmatrix}0\\ x\end{bmatrix}, \begin{bmatrix}0 \\ Bx\end{bmatrix},\begin{bmatrix}0 \\ B^*x\end{bmatrix}
\right\}.\] Since $B$ is not normal, if such a non-zero $x$ exists, it cannot be an eigenvector of both $B$ and $B^*$.
Consequently, $\Rk\{x,Bx,B^*x\}=2$, and $\mathcal L=\C^{4\times 4}$.
\end{proof}  
{\em Proof of Proposition~\ref{th:irred}.} As it was shown in the proof of Theorem~\ref{th:gen}, condition (i) implies that by scaling, rotating, and unitary similarities the matrix $A$ can be put in form \eqref{AB} with a non-normal $B$. Since these transformations preserve unitary (ir)reducibility, we only need to consider this special case. In particular, Lemma~\ref{th:aux} is applicable.
	
Observe that any reducing subspace $\mathcal L$ of $A$ has to be invariant under 
\[ A^2=\alpha^2I+\diag[CD, DC]=(\alpha^2-1)I+\diag[B^*B+B-B^*,BB^*+B-B^*], \]
and therefore under $\im A^2$ and $\re A^2$. This implies the invariance under 
$\diag[B^*B,BB^*]$ and $\diag[\im B,\im B]$. Equivalently, $\mathcal L$ is invariant under 
$H_1=\diag[P_1,P_2]$ and $H_2=\diag[P,P]$, where $P_1,P_2$ and $P$ are spectral projections of $B^*B,BB^*$ and $\im B$, respectively. Note that these projections all have rank one, and $P_1$ does not commute with $P_2$, due to non-normality of $B$. 

With an appropriate choice of a unitary matrix $W\in\C^{2\times 2}$, a unitary similarity via $\diag[W,W]$ can be used to put $P$ in the form $\diag[1,0]$, without changing the structure \eqref{AB} of $A$. We will use the notation $\begin{bmatrix}t_j & \overline{\omega_j} \\ \omega_j & 1-t_j\end{bmatrix}$ for the resulting form of $P_j$, $j=1,2$. Here $0\leq t_j\leq 1$, with $t_j$ not equal zero (or one) simultaneously, and $\abs{\omega_j}^2=t_j(1-t_j)$. 

Since $H_2=\diag[1,0,1,0]$, a non-zero $\mathcal L$ has to contain a non-zero vector $x$ of the form $[m,0,n,0]^T$
or $[0,m,0,n]^T$. If only one of $m,n$ is non-zero, Lemma~\ref{th:aux} implies that $\mathcal L=\C^4$. So, we need now only to consider $m,n\neq 0$. The two options for the location of the non-zero entries can be treated similarly, and for the sake of definiteness we will assume that $x=[m,0,n,0]^T$.   

Applying $H_1$ and then $H_2$ we see that 
\eq{t12} [t_1m,\omega_1m,t_2n,\omega_2n]^T, \ [t_1m,0,t_2n,0]^T\in\mathcal L. \en
If $t_1\neq t_2$, comparing the second vector from \eqref{t12} with $x$ we conclude that $e_1$ or $e_3$ lies in $\mathcal L$. We can thus invoke Lemma~\ref{th:aux} again. 

It remains to consider the case $t_1=t_2$. The second inclusion in \eqref{t12} is then redundant while the first can be simplified to $y=[0,\omega_1m,0,\omega_2n]^T\in\mathcal L$. 

Since $\mathcal L$ is also invariant under $\im A=\begin{bmatrix}vI & -iI \\ iI & -vI\end{bmatrix}$, along with $x,y$ it will contain $\tilde{x}=[vm-in,0,im-vn,0]^T$ and $\tilde{y}=[0,v\omega_1m-i\omega_2n,0,i\omega_1m-v\omega_2n]^T$. 
So, $\dim\mathcal L\geq \Rk\{x,\tilde{x},y,\tilde{y}\}=\Rk \Delta_1+\Rk\Delta_2$, where \[ \Delta_1=\begin{bmatrix}m & vm-in \\ n & im-vn\end{bmatrix}, \quad \Delta_2=\begin{bmatrix}\omega_1m & \omega_1vm-i\omega_2n \\ \omega_2n & i\omega_1m-\omega_2vn\end{bmatrix}.\]  But 
$\det\Delta_1=i(m^2+n^2)-2vmn$  and $\det\Delta_2=i(\omega_1^2m^2+\omega_2^2n^2)-2\omega_1\omega_2vmn$ cannot both equal zero unless $\omega_1=\omega_2$. This, however, is precluded by non-commutativity of $P_1$ with $P_2$.
So, $\mathcal L$ is at least 3-dimensional, it therefore has a non-trivial intersection with 2-dimensional $\mathcal L_\pm$, and yet another application of Lemma~\ref{th:aux} completes the proof. \qed

\section{Reciprocal matrices} \label{s:rec} 
To illustrate the applicability of Theorem~\ref{th:gen}, let us consider a so called {\em reciprocal} 4-by-4 matrix. By definition this is a tridiagonal matrix with the off-diagonal pairs of mutually inverse entries: 

\eq{rec} A=\begin{bmatrix} 0 & a_1 & 0 & 0\\ a_1^{-1} & 0 & a_2 & 0 \\ 0 & a_2^{-1} & 0 & a_3 \\ 0 & 0 & a_3^{-1} & 0 \end{bmatrix}. \en 
A diagonal unitary similarity can be used to change the argumets of $a_j$ independently, without any effect on $W(A)$. So, without loss of generality it suffices to consider matrices \eqref{rec} with $a_j>0$ for all $j=1,2,3$. 

Formally speaking, reciprocal matrices are not of the type considered in this paper. However, by a transpositional similarity \eqref{rec} can be put in the form \eqref{A} with $\alpha=\beta=0$ and 
\eq{CDr} C= \begin{bmatrix}a_1 & 0\\ a_2^{-1} & a_3\end{bmatrix}, \quad D=\begin{bmatrix}a_1^{-1} & a_2 \\ 0 & a_3^{-1}\end{bmatrix}.\en
This observation, combined with \cite[Lemma~2.2]{GeS}, was used in K.~Vazquez's Capstone project (under the supervision of one of the authors) to obtain the criterion for the matrix \eqref{rec} to have an elliptical numerical range. A similar result was obtained simultaneously and independently by N.~Bebiano and J.~da~Provid\'encia (private communication). Stated in terms of 
\eq{Aj} A_j= \frac{a_j^2+a_j^{-2}}{2}\ (\geq 1), \en this criterion looks as follows:

\[ A_2=\frac{1+\sqrt{5}}{2}A_1+\frac{1-\sqrt{5}}{2}A_3 \text{ or }  A_2=\frac{1+\sqrt{5}}{2}A_3+\frac{1-\sqrt{5}}{2}A_1,  \]
and at  least one of the inequalities in \eqref{Aj} is strict. 

Here is what follows for reciprocal matrices \eqref{rec} by applying Theorem~\ref{th:gen} to
$A$ given by \eqref{A},\eqref{CDr}  with $\alpha=\beta=0$. 
\begin{thm}\label{th:rec}Let $A$ be as in \eqref{rec}. Then $W(A)$ is bi-elliptical if and only if $A_1=A_3>1$ (equivalently: $a_1=a_3\neq 1$ or $a_1=a_3^{-1}\neq 1$) and $A_2=1$ (equivalently: $a_2=1$). \end{thm}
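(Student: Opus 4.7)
The plan is to apply Theorem~\ref{th:gen} to the block decomposition \eqref{A}, \eqref{CDr} with $\alpha = \beta = 0$, and translate its two conditions into statements about the $a_j$'s (equivalently, the $A_j$'s).

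First, I would analyze condition (i) of Theorem~\ref{th:gen}. From \eqref{CDr} with $a_j > 0$, the matrix $M_\theta := e^{-i\theta} C - e^{i\theta} D^*$ is lower triangular, so being a nonzero scalar multiple of a unitary matrix forces it to be diagonal with equal-modulus diagonal entries. The off-diagonal vanishing reads $e^{-i\theta} a_2^{-1} = e^{i\theta} a_2$, i.e., $e^{-2i\theta} = a_2^2$; since $a_2 > 0$, this forces $a_2 = 1$ and $\theta \equiv 0 \pmod \pi$. Fixing $\theta = 0$, the equal-modulus condition on the real diagonal entries $a_j - a_j^{-1}$ ($j = 1, 3$) amounts to $a_1 = a_3$ or $a_1 = a_3^{-1}$, and nontriviality of the scalar forces $a_1 \neq 1$. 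Since the map $a \mapsto (a^2 + a^{-2})/2$ on positive reals identifies $a$ with $a^{-1}$, these conditions collapse to the desired $A_1 = A_3 > 1$ together with $A_2 = 1$. Non-normality of $M_0 D$ in the second clause of Theorem~\ref{th:gen}(i) holds in either surviving case by a direct check that $D^* D \neq D D^*$ (the matrix $D$ is upper triangular with nonzero off-diagonal entry).

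For condition (ii) of Theorem~\ref{th:gen}, I would follow the reduction in that theorem's proof: rescale $A$ by $1/\gamma$ with $\gamma = (a_1 - a_1^{-1})/2$ and apply the block-diagonal unitary similarity $\diag[I, W]$ with $W = I$ when $a_1 = a_3$ and $W = \diag[1, -1]$ when $a_1 = a_3^{-1}$. In either case a short computation shows that the resulting $B$ in the form \eqref{AB} is upper triangular with real entries (since $\alpha$ and all $a_j$ are real), so $u = v = \eta_1 = \eta_2 = 0$ in the notation of Theorem~\ref{th:real}. Case (i) of Theorem~\ref{th:real} then holds trivially (both sides of the criterion vanish), $b = 1/\gamma \neq 0$, and consequently condition (ii) of Theorem~\ref{th:gen} is automatic whenever condition (i) holds.

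The main subtlety I anticipate is bookkeeping rather than anything deep: the two cases $a_1 = a_3$ and $a_1 = a_3^{-1}$ must be tracked in parallel, and the sign choice implicit in $\sigma_1 + \sigma_2$ in \eqref{gen} must be made consistently. Routing the verification of \eqref{gen} through Theorem~\ref{th:real} rather than evaluating both sides of \eqref{gen} directly avoids this sign bookkeeping and yields a clean proof.
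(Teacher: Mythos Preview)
Your proposal is correct and follows essentially the paper's strategy for necessity, but takes a genuinely different route for sufficiency.

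For condition~(i), the paper first computes $Z$ and the spectrum of $A$ (all real), then invokes the remark after Theorem~\ref{th:gen} that $\theta=\arg(\sigma_1+\sigma_2)\bmod\pi$ to conclude $\theta\equiv 0$; you instead read off $\theta\equiv 0$ and $a_2=1$ simultaneously from the vanishing of the off-diagonal entry of the lower-triangular $M_\theta$. Your path is slightly more direct and avoids the eigenvalue computation. For condition~(ii), the paper verifies \eqref{gen} head-on by computing $\det(\im A)$, $(s_1-s_2)^2$ and $(\sigma_1+\sigma_2)^2$, whereas you carry out the reduction from the proof of Theorem~\ref{th:gen} explicitly and land in the setting of Theorem~\ref{th:real} (indeed Corollary~\ref{th:za}, since $\alpha=0$) with real $b_1,b_2$ satisfying $|b_1|=|b_2|$. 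Your route reuses the earlier machinery and sidesteps the sign bookkeeping in $\sigma_1+\sigma_2$; the paper's route is shorter once the spectrum is in hand and does not require splitting into the two cases $a_1=a_3$ versus $a_1=a_3^{-1}$.

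One small point to tighten: you justify non-normality of $M_0D$ by checking $D^*D\neq DD^*$, but that inference is only immediate when $M_0$ is a \emph{scalar} multiple of $I$, i.e.\ in the case $a_1=a_3$. When $a_1=a_3^{-1}$ the matrix $M_0=\gamma\,\diag[1,-1]$ is merely diagonal, and non-normality of $D$ does not by itself imply non-normality of $M_0D$. The fix is trivial---$M_0D$ is still upper triangular with nonzero $(1,2)$ entry $\gamma a_2=\gamma\neq 0$, hence not normal---but the justification as written covers only one of your two cases.
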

\begin{proof}With $C$ and $D$ given by \eqref{CDr}, the matrix $Z$ from \eqref{HZ} is simply 
\[ Z=\begin{bmatrix} 2 & a_2a_3\\ a_2^{-1}a_3^{-1} & 1\end{bmatrix}. \]
So, the eigenvalues $z_1,z_2$ of $Z$ are $(3\pm\sqrt{5})/2$, and by Lemma~\ref{th:leig}(i) with $\alpha=0$ the spectrum of $A$ consists of the four point $\pm(1\pm\sqrt{5})/2$.

The only candidates for $\theta$ in the statement of Theorem~\ref{th:gen} in our setting are therefore integer multiples of $\pi$.  

With this choice of $\theta$, the matrix $e^{-i\theta}C-e^{i\theta}D^*$ up to the sign equals 
\[ C-D^*=\begin{bmatrix}a_1-a_1^{-1} & 0 \\ a_2^{-1}-a_2 & a_3-a_3^{-1}\end{bmatrix}. \]
It is a scalar multiple of a unitary  if and only if $ a_2^{-1}-a_2=0$ and $a_1-a_1^{-1}=\pm(a_3-a_3^{-1})$. These are exactly the conditions $A_2=1$, $A_1=A_3$. 

Finally, with $a_2=1$ the matrix 
\[ ( C-D^*)D= \begin{bmatrix} 1-a_1^{-2} & a_1-a_1^{-1} \\ 0 &  1-a_3^{-2}\end{bmatrix}  \] 
is normal if and only if $a_1=1$ (equivalently: $A_1=1$). This concludes the proof of necessity. 

To prove sufficiency, we just need to show that \eqref{gen} holds with $\theta=0$. To this end, observe that 
\[ \det(\im A)=\frac{1}{16}\abs{\det(C-D^*)}^2=\frac{1}{16}(a_1-a_1^{-1})^2(a_3-a_3^{-1})^2=\frac{1}{4}(A_1-1)^2, \]
and $\im(A^2)$ is unitarily similar to $ \im Z \oplus \im Z$ so that $s_{1,2}=\pm (a_3-a_3^{-1})/2$. 
Consequently, \[ (s_1-s_2)^2=(a_3-a_3^{-1})^2=2A_3-2=2A_1-2.\]
We see that \eqref{gen} indeed holds if we pick $\sigma_{1,2}\in\sigma(A)$
as say $(1\pm\sqrt{5})/2$.
\end{proof}  
\begin{figure}[H]
    \centering
    \includegraphics[scale=0.6]{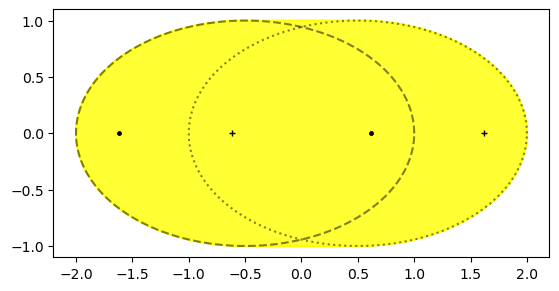}
    \label{fig:rec}
    \caption{An example illustrating Theorem~\ref{th:rec} with $A_2 = 1$ and $A_1 = A_3 = 3$.
    }
\end{figure}

\providecommand{\bysame}{\leavevmode\hbox to3em{\hrulefill}\thinspace}
\providecommand{\MR}{\relax\ifhmode\unskip\space\fi MR }
\providecommand{\MRhref}[2]{%
	\href{http://www.ams.org/mathscinet-getitem?mr=#1}{#2}
}
\providecommand{\href}[2]{#2}

\end{document}